\newtheorem{theorem}{Theorem}
\newtheorem{condition}[theorem]{Condition}
\newtheorem{corollary}[theorem]{Corollary}
\newtheorem{lemma}[theorem]{Lemma}
\newtheorem{proposition}[theorem]{Proposition}
\newcommand{\sss}[1]{\scriptscriptstyle{#1}}
\newcommand{\inprob}{\overset{\sss{\Prob}}{\longrightarrow}}
\newcommand{\Prob}{\mathbb{P}}
\newcommand*{\swap}[2]{#2#1}
\title{Hierarchical configuration model}
\author{Remco van der Hofstad, Johan S.H. van Leeuwaarden, Clara Stegehuis}
\affil{Eindhoven University of Technology, Department of Mathematics and Computer Science, P.O. Box 513, 5600 MB Eindhoven, The Netherlands}
\begin{document}
	\maketitle
\begin{abstract}
We introduce a class of random graphs with a community structure, which we call the \textit{hierarchical configuration model}. On the inter-community level, the graph is a configuration model, and on the intra-community level, every vertex in the configuration model is replaced by a community: i.e., a small graph. These communities may have any shape, as long as they are connected. For these hierarchical graphs, we find the size of the largest component, the degree distribution and the clustering coefficient.
 Furthermore, we determine the conditions under which a giant percolation cluster exists, and find its size. 
\end{abstract}

\section{Introduction and model}\label{sec:intr}

A characteristic feature of many real-world complex networks is that the degree distribution obeys a power law. A popular model for such power-law networks is the \emph{configuration model}, a random graph with a prescribed degree distribution~\cite{bollobas2001}. A major shortcoming of this model, however, is that it is locally tree-like \--- it contains only a few short cycles and the graph next to most vertices is a tree \--- while a prominent feature of complex networks is that they often have a community structure~\cite{newman2002a}. The communities are highly connected and contain many short cycles, while edges between different communities are more scarce. Therefore, the configuration model is not a realistic model to study networks with a community structure. 

Several other random graph models have been proposed to include community structures or short cycles. For example, in~\cite{ball2009, coupechoux2014, trapman2007} communities are introduced in the form of households, i.e., complete graphs of a certain size. The random graph then remains a configuration model on the macroscopic level, while on the microscopic level each vertex of the graph can be replaced by a household. The households introduce a community structure in the graph, which creates random graphs with not only a prescribed degree distribution but also a tunable clustering coefficient.
Another way to incorporate short cycles in the configuration model is to introduce triangles~\cite{newman2009}. In this case, besides the vertex degrees, also the number of triangles each vertex belongs to is prescribed. Then triangles are formed by joining three nodes at random, and regular edges are formed as in the configuration model. The model was extended in~\cite{karrer2010} to include arbitrary subgraphs. 
Yet another method to include clustering in random graph models is to use \emph{random intersection graphs}~\cite{deijfen2009}, which allows one to prescribe the exponent of the power-law degree sequence and the amount of clustering. 
In~\cite{sah2014}, an algorithm is developed for creating a model that matches the community structure in real-world networks. This algorithm first randomizes the edges between different communities according to a configuration model, and then it randomizes the edges inside the communities. This creates a graph with a similar community structure and degree distribution as the real-world network. However, this model was only studied through extensive simulations; the analytical properties of this model were not studied in~\cite{sah2014}. 

In this paper, we introduce the hierarchical configuration model, a random graph model that can describe networks with an arbitrary community structure. This model has a hierarchical structure that consists of two levels. The macroscopic level consists of the connections between communities, and the microscopic level describes the connections inside communities.
Like in~\cite{ball2009} and~\cite{coupechoux2014}, we study a random graph which is a configuration model on the macroscopic level, and then add communities on the microscopic level. In real-world networks, however, communities do not have to be complete graphs, especially when the communities are large, as is typically observed through community detection algorithms~\cite{guimera2003}. 
We generalize the setting of~\cite{ball2009} and~\cite{coupechoux2014} to a configuration model in which a vertex can be replaced by any \textit{small} graph. This generalization makes it possible to apply the hierarchical configuration model to real-world data sets. When the community structure of a real-world network is detected by an algorithm, the hierarchical configuration model is able to produce random graphs that have a similar community structure. 
Furthermore, due to the general community structure of the hierarchical configuration model, several existing random graph models turn out to be special cases.  
The model developed in~\cite{house2010} is similar to our model, but in contrast to the hierarchical configuration model, it only allows for a finite number of different communities, and all communities have to be of constant size.
The advantage of the hierarchical configuration model is that it is quite flexible in its \textit{local} structure, yet it is still analytically tractable due to its mesoscopic locally tree-like structure. In~\cite{stegehuis2015, stegehuis2016}, we have further studied how this model fits real-world networks, the conclusion being that our model fits quite well. This is an important step to come to more realistic random  graph models for real-world networks.

This paper is organized as follows.
In Section~\ref{sec:model} we define the hierarchical configuration model. Section~\ref{sec:gen} presents several analytical results for the hierarchical configuration model, including the condition for a giant component to emerge, the degree distribution and the clustering coefficient. In Section~\ref{sec:perc} we study bond percolation on the hierarchical configuration model.
Section~\ref{sec:exex} describes examples of graph models in the literature that fit into our general framework. Then we show in Section~\ref{sec:sty} how some stylized community structures affect percolation. Finally, we present some conclusions in Section~\ref{sec:disc}. 
 
 \paragraph*{Notation.}\label{sec:notation}
We use $\overset{d}\longrightarrow$ for convergence in distribution, and $\inprob $ for convergence in probability. We say that a sequence of events $(\mathcal{E}_n)_{n\geq 1}$ happens with high probability (w.h.p.) if $\lim_{n\to\infty}\Prob(\mathcal{E}_n)=1$. Furthermore, we write $f(n)=o(g(n))$ if $\lim_{n\to\infty}f(n)/g(n)=0$, and $f(n)=O(g(n))$ if $|f(n)|/g(n)$ is uniformly bounded, where $(g(n))_{n\geq 1}$ is nonnegative. We say that $X_n=O_{\sss{\Prob}}(b_n)$ for a sequence of random variables $(X_n)_{n\geq 1}$ if $|X_n|/b_n$ is a tight sequence of random variables, and $X_n=o_{\sss{\Prob}}(b_n)$ if $X_n/b_n\inprob 0$. Table~\ref{tab:addlabel}, at the end of this paper, contains a list of symbols that are used frequently throughout the manuscript.

\subsection{Hierarchical configuration model}\label{sec:model}

We now describe the random graph model that we introduce and study in this paper. 
Consider a random graph $G$ with $n$ communities. A community $H$ is represented by $H=(F,(d_v^{\sss{(b)}})_{v\in V})$, where $F=(V_F,E_F)$ is a simple, connected graph, and $d_v^{\sss{(b)}}$ is the number of edges from $v\in V_F$ to other communities. Thus $(d_v^{\sss{(b)}})_{v\in V_F}$ describes the degrees between the communities. We call $d_v^{\sss{(b)}}$ the \emph{inter-community degree} of a vertex. A vertex inside a community also has an \emph{intra-community degree} $d_v^{\sss{(c)}}$: the number of edges from that vertex to other vertices in the same community. The sum of the inside- and the inter-community degree of the vertex is the degree of the vertex, i.e., $d_v=d_v^{\sss{(b)}}+d_v^{\sss{(c)}}$. Let $d_H=\sum_{v\in V_F}d_v^{\sss{(b)}}$ be the total number of edges out of community $H$. On the macroscopic level, $G$ is a configuration model with degrees $d_H$. Let this macroscopic configuration model be denoted by $\phi(G)$.

Let $H_n=(F_n,\boldsymbol{d}_n)$ denote a uniformly chosen community in $[n]=\{1,2,\dots,n\}$. Furthermore, denote the number of communities of type $H$ in a graph with $n$ communities by $n^{\sss{(n)}}_H$. Then $n^{\sss{(n)}}_H/n$ is the fraction of communities that are of type $H$.
Let $D_n$ be the number of outgoing edges from a uniformly chosen community, i.e., $D_n=d_{H_n}$. Let the size of community $i$ be denoted by $s_i$, and the size of a uniformly chosen community in $[n]$ by $S_n\overset{d}{=}|F_n|$. Then the total number of vertices in the graph is $N=\sum_{i=1}^ns_i=n\mathbb{E}[S_n]$. We assume that the following conditions hold:

\begin{condition}[Community regularity]\label{cond:graph}
	\leavevmode
	\begin{enumerate}
		\item\label{cond:pn}
		$P_n(H)=n^{\sss{(n)}}_H/n\inprob P(H)$, where $P(H)$ is a probability distribution,
		\item\label{cond:S}
		$
		\lim_{n\to\infty}\mathbb{E}[S_n]=\mathbb{E}[S],$
	\end{enumerate}
	for some random variable $S$ with $\mathbb{E}[S]<\infty$.
\end{condition}

\begin{condition}[Intercommunity connectivity]\label{cond:size}
\leavevmode
\begin{enumerate}
\item \label{enum:ED}
$
\lim_{n\to\infty}\mathbb{E}[D_n]=\mathbb{E}[D],
$
\item \label{enum:PD}
$\Prob(D=2)<1$,
\end{enumerate}
for some random variable $D$ with $\mathbb{E}[D]<\infty$.
\end{condition}
Condition~\ref{cond:graph}\ref{cond:pn} implies $(F_n,\boldsymbol{d}_n)\overset{d}\longrightarrow (F,\boldsymbol{d})$, $D_n\overset{d}{\longrightarrow}D$ and $S_n\overset{d}{\longrightarrow}S$, so that $S$ and $D$ are the asymptotic community size distribution and community inter-community degree distribution, respectively. Define 
\begin{align}
p_{k,s}^{\sss{(n)}}&=\sum_{H=(F,\boldsymbol{d}):|F|=s,d_H=k}P_n(H),\\
p_{k,s}&=\sum_{H=(F,\boldsymbol{d}):|F|=s,d_H=k}P(H),
\end{align}
as the probabilities that a uniformly chosen community has size $s$ and inter-community degree $k$, for finite $n$ and $n\to\infty$, respectively. Then Condition~\ref{cond:graph} implies that $p_{k,s}^{\sss{(n)}}\to p_{k,s}$ for every $(k,s)$.

We can think of $P_n(H)$ as the probability that a uniformly chosen community has a certain shape.
In a data set we can approximate $P_n(H)$ and use the hierarchical configuration model in the following way.
Suppose a community detection algorithm gives the empirical distribution of the community shapes $P_n(H)$. Now we construct a random graph in the way that was described above. The probability that a certain community is of shape $H$ is $P_n(H)$. We condition on the total inter-community degree to be even so that edges between communities can be formed as in a configuration model. This results in a graph with roughly the same degree sequence as the original graph. Additionally, the community structure in the random graph is the same as in the original graph. 
This construction preserves more of the microscopic features of the original graph than a standard configuration model with the same degree sequence as the original graph. It also shows the necessity of extending the work of~\cite{coupechoux2014, trapman2007} to go beyond the assumption that communities are complete graphs, because communities in real-world networks can be non-complete. Using this construction, the hierarchical configuration model can match the community structure in many complex networks~\cite{stegehuis2015, stegehuis2016}.

\section{Model properties}\label{sec:gen}

For a connected component of $G$, we can either count the number of communities in the component, or the number of vertices in it. We denote the number of communities in a connected component $\mathscr{C}$ by $v(\mathscr{C}^{\sss{\text{H}}})$, and the number of communities with inter-community degree $k$ by $v_k(\mathscr{C}^{\sss{\text{H}}})$. The number of vertices in component $\mathscr{C}$ is denoted by $v(\mathscr{C})$. Let $\mathscr{C}_{\textup{max}}$  and $\mathscr{C}_2$ be the largest and second largest components of $G$, respectively, so that
	\begin{equation}
	v(\mathscr{C}_{\max})=\max_{u\in [N]}v(\mathscr{C}(u)),
	\end{equation}
	where $\mathscr{C}(u)$ denotes the component of vertex $u$.
Furthermore, define $\nu_D$ as
\begin{equation}
\nu_D=\frac{\mathbb{E}[D(D-1)]}{\mathbb{E}[D]},
\end{equation}
where $D$ is the asymptotic community degree of Condition~\ref{cond:size}.
Let $p_k=\Prob(D=k)$ and let $g(x)=\sum_kp_kx^k$ be the probability generating function of $D$, and $g'(x)=\sum_kkp_kx^{k-1}$ its derivative.

\subsection{Giant component}\label{sec:giant}

In the standard configuration model, a giant component exists w.h.p. if $\nu_D>1$~\cite{janson2009, molloy1998, molloy1995}. In the hierarchical configuration model a similar statement holds:

\begin{theorem}\label{thm:size}
	
Let $G$ be a hierarchical configuration model satisfying Conditions\textup{~\ref{cond:graph}} and\textup{~\ref{cond:size}}.
Then, 
\begin{enumerate}
\item \label{enum:nularge}
If $\nu_D>1$, 
\begin{equation}\label{eq:compsize}
\frac{v(\mathscr{C}_{\textup{max}})}{N}\inprob\frac{\sum_{k,s}sp_{k,s}(1-\xi^k)}{\mathbb{E}[S]}>0,
\end{equation}
where $\xi$ is the unique solution in $[0,1)$ of $g'(\xi)=\xi\mathbb{E}[D]$. Furthermore, $v(\mathscr{C}_2)/N\inprob 0$.
\item \label{enum:nusmall}
If $\nu_D\leq1$, then $v(\mathscr{C}_{\textup{max}})/N\inprob 0$.
\end{enumerate}
\end{theorem}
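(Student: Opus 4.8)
The plan is to reduce everything to the macroscopic configuration model $\phi(G)$ and then translate back to vertices by bookkeeping the community sizes. The key structural observation is that, since every community graph $F$ is connected, the connected components of $G$ are in bijection with those of $\phi(G)$: a set of communities forms a component of $\phi(G)$ precisely when the union of their vertex sets is a component of $G$. Hence, if $u$ lies in community $H$ and $\mathscr{C}^{\sss{\text{H}}}(H)$ is the set of communities in the $\phi(G)$-component of $H$, then $v(\mathscr{C}(u))=\sum_{H'\in\mathscr{C}^{\sss{\text{H}}}(H)}s_{H'}$, so $v(\mathscr{C}_{\textup{max}})$ equals the largest total community size over all components of $\phi(G)$.

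First I would record what Conditions~\ref{cond:graph} and~\ref{cond:size} give for $\phi(G)$: it is a configuration model on $n$ vertices with degree $D_n\overset d\to D$, $\mathbb{E}[D_n]\to\mathbb{E}[D]<\infty$, and $\Prob(D=2)<1$. The standard configuration-model results~\cite{molloy1995, molloy1998, janson2009} then apply. If $\nu_D\le 1$, then $v(\mathscr{C}_{\textup{max}}^{\sss{\text{H}}})=o_{\sss{\Prob}}(n)$. If $\nu_D>1$, there is a unique component $\mathscr{C}_{\textup{max}}^{\sss{\text{H}}}$ of order $n$, all others have $o_{\sss{\Prob}}(n)$ communities, and $v_k(\mathscr{C}_{\textup{max}}^{\sss{\text{H}}})/n\inprob p_k(1-\xi^k)$, where $\xi$ is the smallest fixed point of $g'(x)=x\mathbb{E}[D]$ in $[0,1]$. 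Here $1-\xi^k$ is the survival probability of a degree-$k$ vertex in the branching-process approximation (each of its $k$ edges independently fails to reach the giant with probability $\xi$), and $\xi<1$ holds exactly when $\nu_D>1$, since $g'(\cdot)/\mathbb{E}[D]$ is the convex generating function of a nonnegative integer variable with mean $\nu_D$ that is not a.s.\ equal to $1$ (the latter being ruled out by $\Prob(D=2)<1$).

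For part~\ref{enum:nularge} it remains to sum community sizes over $\mathscr{C}_{\textup{max}}^{\sss{\text{H}}}$. I would use that the random pairing of inter-community half-edges is exchangeable under permutations of communities of equal inter-community degree, so that, conditionally on $v_k:=v_k(\mathscr{C}_{\textup{max}}^{\sss{\text{H}}})$, the degree-$k$ communities inside $\mathscr{C}_{\textup{max}}^{\sss{\text{H}}}$ form a uniformly random $v_k$-subset of all degree-$k$ communities. A variance bound for sampling without replacement then yields $v(\mathscr{C}_{\textup{max}})=\sum_k v_k\,\bar s_k^{\sss{(n)}}+o_{\sss{\Prob}}(n)$, where $\bar s_k^{\sss{(n)}}$ is the average size of a degree-$k$ community; since $\frac1n\sum_{H:d_H=k}s_H\to\sum_s sp_{k,s}$ by Condition~\ref{cond:graph}, combining with $v_k/n\inprob p_k(1-\xi^k)$ and $N/n\to\mathbb{E}[S]$ gives~\eqref{eq:compsize}. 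Unbounded community sizes are handled by truncating at level $K$ and using that $S_n\overset d\to S$ together with $\mathbb{E}[S_n]\to\mathbb{E}[S]$ forces uniform integrability, so $\sup_n\mathbb{E}[S_n\mathbbm 1\{S_n>K\}]\to 0$ as $K\to\infty$. The same truncation estimate, applied to the sum of the largest $v(\mathscr{C}_2^{\sss{\text{H}}})=o_{\sss{\Prob}}(n)$ community sizes, proves $v(\mathscr{C}_2)/N\inprob 0$; applied to the largest $v(\mathscr{C}_{\textup{max}}^{\sss{\text{H}}})=o_{\sss{\Prob}}(n)$ community sizes when $\nu_D\le 1$, it proves part~\ref{enum:nusmall} since then $N\asymp n$.

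The main obstacle I anticipate is the concentration in part~\ref{enum:nularge}: one must upgrade the configuration-model statement about the fraction of degree-$k$ vertices in the giant to control the total vertex mass contributed by each degree class, and simultaneously tame possibly unbounded community sizes; this is precisely where the exchangeability-plus-sampling argument (or, alternatively, a first- and second-moment computation for $\frac1N\sum_u\mathbbm 1\{u\in\mathscr{C}_{\textup{max}}\}$, based on asymptotic independence of the giant-membership of two uniformly chosen vertices) together with the truncation step does the real work. A self-contained alternative would be to run the community-level exploration directly, coupling it to a branching process of communities whose offspring is governed by the size-biased inter-community degree distribution and tracking the accumulated vertex count; this avoids quoting $v_k(\mathscr{C}_{\textup{max}}^{\sss{\text{H}}})$ but requires redoing the uniqueness/sprinkling argument for $\phi(G)$.
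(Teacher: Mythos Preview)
Your proposal is correct and follows essentially the same route as the paper: reduce to the macroscopic configuration model $\phi(G)$, quote the standard results $v_k(\mathscr{C}^{\sss{\text{H}}}_{\textup{max}})/n\inprob p_k(1-\xi^k)$, translate back to vertices by summing community sizes, and control small components (and the subcritical case) via the truncation $\mathbb{E}[S_n\mathbbm 1\{S_n>K\}]\to 0$. The only real difference is that the paper asserts in one line that ``given inter-community degree $k$, the size of a community is independent of being in the largest hierarchical component'' and then passes directly to $v_{k,s}(\mathscr{C}^{\sss{\text{H}}}_{\textup{max}})/n\inprob p_{k,s}(1-\xi^k)$, whereas you spell this out via exchangeability and a sampling-without-replacement variance bound; your version is the same idea made explicit.
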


\begin{proof}

Suppose $\nu_D>1$. 
By~\cite[Theorem 4.1]{hofstadcomplex2}, if Condition~\ref{cond:size} holds, $D_n\inprob D$ and $\nu_D>1$ in a standard configuration model, then w.h.p.  there will be one component with a positive fraction of the vertices as $n\to\infty$. 
Furthermore, the number of vertices in the largest component in a standard configuration model $v(\mathscr{C}^{\sss{\text{CM}}}_{\textup{max}})$ and the number of vertices of degree $k$ in its largest connected component, $v_k(\mathscr{C}^{\sss{\text{CM}}}_{\textup{max}})$ satisfy
\begin{align}
v(\mathscr{C}^{\sss{\text{CM}}}_{\textup{max}})/n&\inprob 1-g(\xi)>0,\label{eq:commsize}\\
v_k(\mathscr{C}^{\sss{\text{CM}}}_{\textup{max}})/n&\inprob p_k(1-\xi^k) \label{eq:commsizek}.
\end{align}
If $\nu_D\leq1$, then $v(\mathscr{C}^{\sss{\text{CM}}}_{\textup{max}})/n\inprob0$. Therefore, if Conditions~\ref{cond:graph} and~\ref{cond:size} hold and $\nu_D>1$ in the hierarchical configuration model, then there is a component with a positive fraction of the communities as $n\to\infty$. Hence, we need to prove that the largest hierarchical component is indeed a large component with size given by~\eqref{eq:compsize} if $\nu_D>1$, and that a small hierarchical component is also a small component of $G$.

We denote the number of communities in the largest hierarchical component with inter-community degree $k$ and size $s$ by $v_{k,s}(\mathscr{C}^{\sss{\text{H}}}_{\text{max}})$. Since $G$ is a configuration model on the community level,~\eqref{eq:commsize} and~\eqref{eq:commsizek} apply on the community level. Furthermore, given a community in the largest hierarchical component of inter-community degree $k$, its size is \emph{independent} of being in the largest hierarchical component. Moreover, $\sum_ksv_{k,s}(\mathscr{C}^{\sss{\text{H}}}_{\text{max}})/n\leq \sum_ksp_{k,s}^{\sss{(n)}}$. Therefore, by Condition~\ref{cond:graph}, the fraction of vertices in the largest hierarchical component satisfies
\begin{align}\label{eq:giantcomm}
\frac{v(\mathscr{C}_{\textup{max}})}{N}=\frac{\sum_{i\in \mathscr{C}^{\textup{H}}_{\textup{max}}}s_i}{\sum_is_i}=\frac{\sum_{k,s}n^{-1}sv_{k,s}(\mathscr{C}^{\sss{\text{H}}}_{\text{max}})}{n^{-1}\sum_is_i}\inprob\frac{\sum_{k,s}sp_{k,s}(1-\xi^k)}{\mathbb{E}[S]}>0.
\end{align}
The last inequality follows from Condition~\ref{cond:graph}\ref{cond:S} and the fact that $\xi\in[0,1)$ and $s\geq1$.  Now we need to prove that the largest hierarchical component indeed is the largest component of $G$. We show that a hierarchical component of size $o_\Prob(n)$ is w.h.p. a component of size $o_{\mathbb{P}}(N)$. Take a hierarchical component $\mathscr{C}$ which is not the largest hierarchical component, so that it is of size $o_{\mathbb{P}}(n)$. Then,
\begin{equation}\label{eq:Clb}
\begin{aligned}[b]
\frac{v(\mathscr{C})}{N}&=\frac{n^{-1}\sum_{k,s}sv_{k,s}(\mathscr{C}^{\sss{\text{H}}})}{\mathbb{E}[S_n]}=\frac{n^{-1}\sum_{s=1}^K\sum_{k}sv_{k,s}(\mathscr{C}^{\sss{\text{H}}})}{\mathbb{E}[S_n]}+\frac{n^{-1}\sum_{s>K}\sum_{k}sv_{k,s}(\mathscr{C}^{\sss{\text{H}}})}{\mathbb{E}[S_n]}\\
&\leq K\frac{n^{-1}\sum_{s=1}^K\sum_{k}v_{k,s}(\mathscr{C}^{\sss{\text{H}}})}{\mathbb{E}[S_n]}+\frac{\mathbb{E}[S_n\mathbbm{1}_{\{S_n>K\}}]}{\mathbb{E}[S_n]}\leq
K\frac{n^{-1}v(\mathscr{C}^{\sss{\text{H}}})}{\mathbb{E}[S_n]}+\frac{\mathbb{E}[S_n\mathbbm{1}_{\{S_n>K\}}]}{\mathbb{E}[S_n]}.
\end{aligned}
\end{equation}
First we take the limit for $n\to\infty$, and then we let $K\to\infty$. 
By~\cite{janson2009}, $v(\mathscr{C}^{\sss{\text{H}}})/n\inprob 0$, hence the first term tends to zero as $n\to\infty$. Furthermore, $\mathbb{E}[S_n\mathbbm{1}_{\{S_n>K\}}]\to\mathbb{E}[S\mathbbm{1}_{\{S>K\}}]$ as $n\to\infty$ by Condition~\ref{cond:graph}. By Condition~\ref{cond:graph}\ref{cond:S}, this tends to zero as $K\to\infty$. Thus, $v(\mathscr{C})/N\inprob 0$. Since~\eqref{eq:Clb} is uniform in $\mathscr{C}$, this proves that the largest hierarchical component is indeed the largest component of $G$.
This also proves~\ref{enum:nusmall}, since by~\cite[Theorem~4.1]{hofstadcomplex2}, if $\nu_D\leq 1$, $v(\mathscr{C}^{\sss{\text{H}}}_{\text{max}})=o_{\mathbb{P}}(n)$, so that $v(\mathscr{C}_{\textup{max}})=o_{\mathbb{P}}(N)$.
\end{proof}

We conclude that if Conditions~\ref{cond:graph} and~\ref{cond:size} hold and $\nu_D>1$, then a giant component exists in the hierarchical configuration model. Equation~\eqref{eq:compsize} gives the fraction of vertices in the largest component. The fraction of vertices in the giant component may be different from the fraction of communities in the giant hierarchical component. 
If the sizes and the inter-community degrees of the communities are independent, then the fraction of vertices in the largest component is equal to the fraction of communities in the largest hierarchical component.

\begin{corollary}\label{cor:indep}
Suppose that in the hierarchical configuration model $G$ satisfying Conditions\textup{~\ref{cond:graph}} and\textup{~\ref{cond:size}}, the size of the communities and the inter-community degrees of the communities are independent. Then, if $\nu_D>1$,
\begin{align}
\frac{v(\mathscr{C}_{\textup{max}})}{N}&\inprob 1-g(\xi),\label{eq:indepsize}\\
\frac{v(\mathscr{C}^{\textup{H}}_{\textup{max}})}{n}&\inprob 1-g(\xi)\label{eq:indepsize2},
\end{align}
where $\xi$ is the unique solution in $[0,1)$ of $g'(\xi)=\xi\mathbb{E}[D]$. Hence the fraction of vertices in the largest component is equal to the fraction of communities in the largest hierarchical component. If the size and the inter-community degrees are dependent, then this does not have to be true.
\end{corollary}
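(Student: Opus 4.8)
The plan is to obtain both displays directly from Theorem~\ref{thm:size}, using only that independence of the community size and the inter-community degree makes the joint law $p_{k,s}$ factorize.

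First I would record that, since $p_{k,s}$ is by definition the probability that a uniformly chosen community has size $s$ and inter-community degree $k$, the independence hypothesis gives $p_{k,s}=\Prob(S=s)\,p_k$ for all $k,s$, where $p_k=\Prob(D=k)$. Substituting this into the right-hand side of~\eqref{eq:compsize} and separating the two sums yields
\begin{equation*}
\frac{\sum_{k,s}sp_{k,s}(1-\xi^k)}{\mathbb{E}[S]}
=\frac{\big(\sum_s s\,\Prob(S=s)\big)\big(\sum_k p_k(1-\xi^k)\big)}{\mathbb{E}[S]}
=\frac{\mathbb{E}[S]\,(1-g(\xi))}{\mathbb{E}[S]}
=1-g(\xi),
\end{equation*}
since $\sum_k p_k=1$ and $g(\xi)=\sum_k p_k\xi^k$. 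Together with Theorem~\ref{thm:size}\ref{enum:nularge} this proves~\eqref{eq:indepsize}.

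For~\eqref{eq:indepsize2} I would argue that the number of communities in the largest hierarchical component is exactly the number of vertices in the largest component of the macroscopic configuration model $\phi(G)$; indeed, the proof of Theorem~\ref{thm:size} already identifies the largest hierarchical component of $G$ with the largest component of $\phi(G)$. Hence~\eqref{eq:commsize} applies verbatim and gives $v(\mathscr{C}^{\textup{H}}_{\textup{max}})/n\inprob 1-g(\xi)$. Note that, unlike~\eqref{eq:indepsize}, this limit does not use the independence assumption at all; it is only the comparison of the two fractions that needs it, and with both limits in hand the claimed equality is immediate.

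Finally, for the last assertion it suffices to exhibit one dependent example in which the two limiting fractions differ. I would take the community size to be a strictly increasing deterministic function of the inter-community degree, so that in $\sum_{k,s}sp_{k,s}(1-\xi^k)$ the weight is biased towards the high-degree communities, which are precisely the ones most likely to lie in the giant hierarchical component; a short computation then shows $\sum_{k,s}sp_{k,s}(1-\xi^k)/\mathbb{E}[S]>1-g(\xi)$, so the vertex fraction of the giant strictly exceeds its community fraction. There is no genuine obstacle in this corollary—it is an algebraic specialization of Theorem~\ref{thm:size}—the only points needing a line of justification being the factorization of $p_{k,s}$ and the identification of $v(\mathscr{C}^{\textup{H}}_{\textup{max}})$ with the giant of $\phi(G)$, both of which are essentially contained in the material above.
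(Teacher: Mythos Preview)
Your derivations of~\eqref{eq:indepsize} and~\eqref{eq:indepsize2} are exactly the paper's: factorize $p_{k,s}=p_k\,\Prob(S=s)$ inside~\eqref{eq:compsize}, and for~\eqref{eq:indepsize2} invoke the giant-component result for the macroscopic configuration model $\phi(G)$ (the paper cites \cite[Theorem~4.1]{hofstadcomplex2} directly rather than going through the proof of Theorem~\ref{thm:size}, but it is the same statement). Your remark that~\eqref{eq:indepsize2} does not actually use independence is correct and worth keeping.

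For the final assertion your route differs from the paper's. The paper simply exhibits a concrete two-atom example, $p_{3,10}=\tfrac13$ and $p_{1,1}=\tfrac23$, solves for $\xi$, and computes both limiting fractions numerically to see they differ. You instead propose a structural argument: with $S=f(D)$ strictly increasing, the vertex fraction becomes $\mathbb{E}[f(D)(1-\xi^{D})]/\mathbb{E}[f(D)]$, and this exceeds $\mathbb{E}[1-\xi^{D}]=1-g(\xi)$ by the Chebyshev correlation inequality, since $f$ and $k\mapsto 1-\xi^{k}$ are both increasing. That is a cleaner and more informative explanation than the paper's bare computation. To turn it into a proof, however, you should (i) state the correlation inequality explicitly rather than leave it as ``a short computation'', and (ii) note that \emph{strict} inequality requires $D$ to be non-degenerate and $\xi\in(0,1)$ (equivalently $p_1>0$); otherwise one of the two functions is constant on the support of $D$ and the inequality collapses to equality. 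Picking any specific $f$ and degree law with $p_1>0$ and at least two atoms then finishes the job.
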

\begin{proof}
The equality in~\eqref{eq:indepsize2} is given by~\cite[Theorem~4.1]{hofstadcomplex2}. The equality in~\eqref{eq:indepsize} follows by substituting $p_{k,s}=p_kp_s$ in~\eqref{eq:compsize}, so that
\begin{equation}
\frac{v(\mathscr{C}_{\textup{max}})}{N}\inprob\frac{\sum_{s}sp_s\sum_kp_k(1-\xi^k)}{\mathbb{E}[S]}=\frac{\mathbb{E}[S](1-\sum_kp_k\xi^k)}{\mathbb{E}[S]}=1-g(\xi).
\end{equation}
To show that~\eqref{eq:indepsize} may not hold when the inter-community degrees and the sizes are dependent, consider the hierarchical configuration model with
\begin{equation}
p_{k,s}=\begin{cases}
\frac13&\text{if }(k,s)=(3,10),\\
\frac23&\text{if }(k,s)=(1,1).
\end{cases}
\end{equation}
Since $\nu_D=\tfrac65>1$, a giant component exists w.h.p. Furthermore, $\xi$ solves
\begin{equation}
\tfrac23+\xi^2=\tfrac53\xi,
\end{equation}
which has $\tfrac23$ as its only solution in $[0,1)$. Therefore, the fraction of communities in the largest component is given by $1-g(\frac23)=\frac{37}{81}$.
To find the fraction of vertices in the largest component, we use~\eqref{eq:compsize}, which gives 
\begin{equation}
\tfrac14(\tfrac23(1-\tfrac12)+10\tfrac13(1-(\tfrac12)^3))=\tfrac{13}{16}>\tfrac{37}{81}.
\end{equation}
Thus, the fraction of vertices in the largest component is larger than the fraction of communities in the largest component.
\end{proof}
If there is a difference between the fraction of communities and the fraction of vertices in the largest component, then this difference is caused by the dependence of the sizes and the inter-community degrees of the communities. A community with a large inter-community degree has a higher probability of being in the largest hierarchical component than a community with a small inter-community degree. In the example in the proof of Corollary~\ref{cor:indep}, the communities with large inter-community degrees are large communities. This causes the fraction of vertices in the largest component to be larger than the fraction of communities in the largest hierarchical component.

\subsection{Degree distribution}\label{sec:degree}

In the hierarchical configuration model, the macroscopic configuration model has a fixed degree sequence. The degree distribution of $G$ depends on the sizes and shapes of the communities. Let $n_k^{\sss{(H)}}$ denote the number of vertices in community $H$ with the sum of their intra-community degree and inter-community degree equal to $k$. Then the degree distribution of the total graph $G$ is described in Proposition~\ref{prop:deg}:
\begin{proposition}\label{prop:deg}
Let $G$ be a hierarchical configuration model such that Conditions\textup{~\ref{cond:graph}} and\textup{~\ref{cond:size}} hold. The asymptotic probability $\hat{p}_k$ that a randomly chosen vertex inside $G$ has degree $k$ satisfies
\begin{equation}
\hat{p}_k=\frac{\sum_{H} P(H)n_k^{\sss{(H)}}}{\mathbb{E}[S]},
\end{equation}
 as $n\to\infty$.
\end{proposition}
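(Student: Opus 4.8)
The plan is to reduce the statement to a deterministic identity for the empirical degree distribution of $G$ and then pass to the limit by a truncation argument parallel to the one behind~\eqref{eq:Clb}. The key initial observation is that the degree of a vertex of $G$ does not depend on how the inter-community half-edges are paired by the macroscopic configuration model: a vertex $v$ lying in a community of type $H=(F,(d_u^{\sss{(b)}})_{u\in V_F})$ always has degree $d_v=d_v^{\sss{(b)}}+d_v^{\sss{(c)}}$, a quantity that depends on $H$ alone. Hence the empirical degree distribution of $G$ is a \emph{deterministic} function of the community counts $(n_H^{\sss{(n)}})_H$: counting vertices of degree $k$ community by community gives $\#\{v\in[N]:d_v=k\}=\sum_H n_H^{\sss{(n)}}n_k^{\sss{(H)}}$, and since $N=n\,\mathbb{E}[S_n]$ and $n_H^{\sss{(n)}}/n=P_n(H)$,
\begin{equation*}
\Prob\big(\text{a uniformly chosen vertex of }G\text{ has degree }k\big)=\frac{\sum_H n_H^{\sss{(n)}}n_k^{\sss{(H)}}}{N}=\frac{\sum_H P_n(H)\,n_k^{\sss{(H)}}}{\mathbb{E}[S_n]}.
\end{equation*}
(Equivalently, sampling a uniform vertex amounts to sampling a size-biased community and then a uniform vertex inside it.) It therefore suffices to prove that the right-hand side converges in probability to $\sum_H P(H)\,n_k^{\sss{(H)}}/\mathbb{E}[S]$.

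The denominator converges to $\mathbb{E}[S]\in(0,\infty)$ by Condition~\ref{cond:graph}\ref{cond:S}, so only the numerator needs work. I would split $\sum_H P_n(H)\,n_k^{\sss{(H)}}$ according to the community size $s=|F|$ and the total inter-community degree $d_H$ into a ``core'' sum over the \emph{finite} set $\{H:\,|F|\le K,\ d_H\le L\}$ plus a remainder. Using the elementary bounds $n_k^{\sss{(H)}}\le|F|$ and $\sum_k n_k^{\sss{(H)}}=|F|$, the remainder is at most $\mathbb{E}\big[S_n\mathbbm{1}_{\{S_n>K\}}\big]+K\,\Prob(D_n>L)$, which by Conditions~\ref{cond:graph} and~\ref{cond:size} (and the distributional convergences $S_n\overset{d}{\longrightarrow}S$, $D_n\overset{d}{\longrightarrow}D$ they entail) converges as $n\to\infty$ to $\mathbb{E}[S\mathbbm{1}_{\{S>K\}}]+K\,\Prob(D>L)$; this last quantity is made arbitrarily small by first taking $K$ large (since $\mathbb{E}[S]<\infty$) and then $L$ large (since $\mathbb{E}[D]<\infty$). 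The core sum runs over only finitely many community types, so by Condition~\ref{cond:graph}\ref{cond:pn} it converges in probability to the same expression with each $P_n(H)$ replaced by $P(H)$. Combining these two facts (letting $n\to\infty$ first and then $K,L\to\infty$ in this order) yields $\sum_H P_n(H)\,n_k^{\sss{(H)}}\inprob\sum_H P(H)\,n_k^{\sss{(H)}}$, and dividing by the denominator gives the claim.

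The only genuinely delicate point is the uniform control of the tail of $\sum_H P_n(H)\,n_k^{\sss{(H)}}$: the sum ranges over infinitely many community types, and a single large community can contribute up to $|F|$ vertices of a prescribed degree, so truncating in the community size alone leaves infinitely many types (because the inter-community degrees are unbounded), while truncating in the inter-community degree alone does not bound $n_k^{\sss{(H)}}$. This is exactly why the truncation must be performed simultaneously in size and in inter-community degree, and why the argument leans on the uniform integrability of $S_n$ and of $D_n$ supplied by Conditions~\ref{cond:graph}\ref{cond:S} and~\ref{cond:size}\ref{enum:ED}. Everything else is bookkeeping of the same flavour as in~\eqref{eq:Clb}.
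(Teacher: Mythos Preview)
Your proof is correct and follows the same overall line as the paper's: write the empirical degree distribution deterministically as $\sum_H P_n(H)\,n_k^{\sss{(H)}}/\mathbb{E}[S_n]$ and then pass to the limit. The one noteworthy difference is in how the numerator limit is justified. The paper argues more directly, using only the domination $n_k^{\sss{(H)}}\le s_H$ and Condition~\ref{cond:graph}: since $P_n(H)\to P(H)$ pointwise and $\sum_H P_n(H)s_H=\mathbb{E}[S_n]\to\mathbb{E}[S]=\sum_H P(H)s_H$, a generalized dominated convergence (Pratt/Scheff\'e-type) argument yields $\sum_H P_n(H)\,n_k^{\sss{(H)}}\to\sum_H P(H)\,n_k^{\sss{(H)}}$ without ever invoking Condition~\ref{cond:size}. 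Your double truncation in $(S,D)$ is a valid alternative that sidesteps this implicit step by reducing the core to a genuinely finite sum, at the price of also using Condition~\ref{cond:size}\ref{enum:ED}. Your closing claim that the truncation \emph{must} be simultaneous is slightly overcautious: truncating only in size would also work, since $P_n(H)\to P(H)$ together with $\sum_H P_n(H)=1\to 1$ forces $\ell^1$-convergence (Scheff\'e), so the infinite core sum over $\{|F|\le K\}$ already converges; but your route is perfectly sound and arguably easier to write out in full.
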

\begin{proof}
Consider a hierarchical configuration model $G$ on $n$ communities. Let $n^{\sss{(n)}}_{H}$ be the number of communities in $G$ of type $H$. The total number of vertices of degree $k$ is the sum of the number of degree $k$ vertices inside all communities, hence it equals $\sum_{H} n^{\sss{(n)}}_{H}n_k^{\sss{(H)}}$. Furthermore, $P_n(H)n^{\sss{(H)}}_k\leq P_n(H)s_H$, so that $\lim_{n\to\infty}\sum_HP_n(N)n_k^{\sss{(H)}}=\sum_HP(H)n_k^{\sss{(H)}}$ by Condition~\ref{cond:graph}. This gives
\begin{equation}
\hat{p}_k^{\sss{(n)}}=\frac{n^{-1}\sum_{H} n_{H}n^{\sss{(H)}}_k}{n^{-1}N}=\frac{\sum_H P_n(H)n^{\sss{(H)}}_k}{\mathbb{E}[S_n]}
\inprob\hat{p}_k,
\end{equation}
as $n\to\infty$.
\end{proof}

\subsubsection{Power-law shift in dense communities}\label{sec:pl}

Proposition~\ref{prop:deg} shows that the inter-community degree distribution and the degree distribution of the graph may be different. A case of special interest is when the degree distribution follows a power law. Let $F_{X_n}(k)$ be the empirical distribution function of $n$ observations. Then we say that $X_n$ follows a power law with exponent $\tau$ if $0<c_1<c_2$ and $k_n$ exist such that
\begin{align}
1-F_{X_n}(k)&\geq c_1k^{-\tau+1}\quad\quad \forall k\leq k_n,\label{eq:deflow}\\
1-F_{X_n}(k)&\leq c_2k^{-\tau+1}\quad\quad \forall k\label{eq:defup},
\end{align}
where $k_n\to\infty$ as $n\to\infty$. Note that~\eqref{eq:deflow} cannot be true for \textit{all} $k$ and $X_n=D_n$, since $c_1k^{-\tau+1}<0$, and $1-F_{X_n}(k)=0$ for $k>\max_{i\in[n]} d_i^{\sss{(b)}}$.

In the case of a power-law degree distribution, the degree distribution and the community size distribution can be related when the communities are dense enough. We call a community $H$ $(\eta, \varepsilon)$-dense if 
\begin{equation}\label{eq:edense}
\#\left\{v\in H: d_v^{\sss{(c)}}\geq \eta (s-1) \right\}\geq\varepsilon s.
\end{equation}
This condition states that at least a fraction of $\varepsilon$ of the vertices of community $H$ have edges to at least a fraction of $\eta$ other vertices in the same community. The special case in which $\varepsilon=\eta=1$ corresponds to complete graph communities. We now show that for dense communities, the power-law exponents of the community sizes and the degree distribution are related. For the special case of household communities, this relation was already observed in~\cite{trapman2007}.

\begin{proposition}\label{prop:plshift}
Let $G$ be a hierarchical configuration model such that Conditions\textup{~\ref{cond:graph}} and\textup{~\ref{cond:size}} hold. Suppose that there exists a $K\geq 0$ such that $d^{\sss{\textup{(b)}}}\leq Ks$ for all vertices, where $s$ is the community size. Furthermore, assume that there exist $\varepsilon,\eta>0$ such that every community of $G$ is $(\eta,\varepsilon)$-dense. Then, the community size distribution $S$ follows a power-law distribution with exponent $\tau'$ with $\tau'>2$ if and only if the degree distribution follows a power law with exponent $\tau=\tau'-1$ where $\tau>1$.
\end{proposition}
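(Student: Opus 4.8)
The governing intuition is that the degrees within an $(\eta,\varepsilon)$-dense community of size $s$ are comparable to $s$: since the community graph is simple on $s$ vertices and the inter-community degree is at most $Ks$, every vertex there has total degree at most $(K+1)s$, while at least $\varepsilon s$ of the vertices have degree at least $\eta(s-1)$. Hence the number of vertices of $G$ of degree at least $k$ is squeezed between two sums over community sizes,
\begin{equation*}
\varepsilon\, n\,\mathbb{E}\big[S_n\mathbbm{1}_{\{\eta(S_n-1)\ge k\}}\big]\ \le\ \#\{v:\ d_v\ge k\}\ \le\ n\,\mathbb{E}\big[S_n\mathbbm{1}_{\{(K+1)S_n\ge k\}}\big],
\end{equation*}
the lower bound using the density assumption and the upper bound the hypothesis $d^{\sss{\textup{(b)}}}\le Ks$. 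Dividing by $N=n\,\mathbb{E}[S_n]$ (which by Condition~\ref{cond:graph} stays bounded away from $0$ and $\infty$) rewrites the degree tail $1-F_{D_n}(k)$ in terms of the \emph{size-biased} tail of $S_n$. Recording these two deterministic inequalities is the first step of the plan.

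The second step is a dictionary between the tail of $S_n$ and its truncated first moment: by summation by parts, a power law $1-F_{S_n}(s)\asymp s^{-\tau'+1}$ (lower bound for $s\le s_n$, upper bound for all $s$) is equivalent to $\mathbb{E}\big[S_n\mathbbm{1}_{\{S_n>s\}}\big]\asymp s^{-\tau'+2}$ on the same range; this is exactly where $\tau'>2$ is needed, to keep the truncated moment finite and the upper tail summable. Granting this, the forward implication is immediate: substituting the $S$-power law into the sandwich gives $1-F_{D_n}(k)\asymp k^{-\tau'+2}$ for $k\le k_n:=\eta\, s_n\to\infty$ (the shift $S_n-1$ and the factors $\eta$, $K+1$ only rescale $k$ by constants, i.e.\ change $c_1,c_2$), which is a power law in the sense of \eqref{eq:deflow}--\eqref{eq:defup} with exponent $\tau=\tau'-1$; and $\tau>1\iff\tau'>2$.

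For the reverse implication I would run the sandwich backwards. Its upper half yields $\varepsilon\, n\,\mathbb{E}[S_n\mathbbm{1}_{\{\eta(S_n-1)\ge k\}}]\le N\,(1-F_{D_n}(k))\le c_2 N\, k^{-\tau+1}$, and since $\mathbb{E}[S_n\mathbbm{1}_{\{S_n>s\}}]\ge s\,\Prob(S_n>s)$ this directly gives the tail upper bound $\Prob(S_n>s)\lesssim s^{-\tau}=s^{-\tau'+1}$ for all $s$. Its lower half yields $\mathbb{E}[S_n\mathbbm{1}_{\{S_n\ge m\}}]\gtrsim m^{-\tau+1}$ for $m\le s_n:=k_n/(K+1)$. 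The one genuinely delicate point, which I expect to be the main obstacle, is extracting from this a matching \emph{tail} lower bound $\Prob(S_n\ge m)\gtrsim m^{-\tau'+1}$: a large truncated moment could a priori be produced by a single enormous atom. I would rule this out with a truncation argument that uses the tail upper bound just obtained — splitting
\begin{equation*}
\mathbb{E}\big[S_n\mathbbm{1}_{\{S_n\ge m\}}\big]=\mathbb{E}\big[S_n\mathbbm{1}_{\{m\le S_n<\lambda m\}}\big]+\mathbb{E}\big[S_n\mathbbm{1}_{\{S_n\ge \lambda m\}}\big],
\end{equation*}
bounding the second term by $C(\lambda m)^{-\tau+1}$, which for $\lambda$ large (using $\tau>1$) is at most half of the lower bound $c'm^{-\tau+1}$, and bounding the first term by $\lambda m\,\Prob(S_n\ge m)$; this forces $\Prob(S_n\ge m)\gtrsim m^{-\tau'+1}$ on $m\le s_n$. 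Finally I would check that the thresholds produced on the two sides satisfy $k_n\asymp s_n\to\infty$, so that both statements hold in the exact form \eqref{eq:deflow}--\eqref{eq:defup}; the remaining $\ge$-versus-$>$ and off-by-one bookkeeping is routine and I would suppress it.
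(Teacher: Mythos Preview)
Your proof is correct and, for the forward direction and for the upper tail bound on $S$ in the reverse direction, it coincides with the paper's argument (the paper also lands on $\mathbb{E}[S_n\mathbbm{1}_{\{S_n\ge a\}}]$ and expands it by summation by parts). The one genuine difference is the reverse-direction \emph{lower} tail bound on $S_n$. The paper obtains it via Jensen's inequality $\mathbb{E}[1/X]\ge 1/\mathbb{E}[X]$, writing
\[
1-F_{S_n}(k)\ \gtrsim\ \mathbb{E}\!\left[\frac{\mathbbm{1}\{\hat D_N\ge k(K+1)-1\}}{\hat D_N+1}\right]\ \ge\ \frac{\mathbb{P}(\hat D_N\ge k(K+1)-1)^2}{\mathbb{E}[(\hat D_N+1)\mathbbm{1}\{\hat D_N+1\ge k(K+1)\}]},
\]
and then bounding numerator and denominator separately. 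Your truncation argument avoids Jensen, but it is not self-contained at this step: it needs the tail \emph{upper} bound on $S_n$ (which you correctly derive first from the other side of the sandwich) to control $\mathbb{E}[S_n\mathbbm{1}_{\{S_n\ge\lambda m\}}]$. Both routes invoke $\tau>1$ (equivalently $\tau'>2$) at exactly the same place, namely to make the residual tail sum small; your packaging via a single ``tail $\asymp$ truncated-moment'' dictionary is arguably tidier than the paper's four separate estimates.
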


\begin{proof}

First, assume that $S_n$ obeys a power law with exponent $\tau'>2$, so that for some $0<b_1<b_2$ and $k_n$, $b_1k^{-\tau'+1}\leq1-F_{S_n}(k)$ for all $k\leq k_n$ and $1-F_{S_n}(k)\leq b_2k^{-\tau'+1}$ for all $k$. 
Then the cumulative distribution function of the degrees $\hat{D}_N$ in a hierarchical configuration model $G$ on $N$ vertices, $F_{\hat{D}_N}(k)$, satisfies
\begin{equation}\label{eq:pllow}
\begin{aligned}[b]
1-F_{\hat{D}_N}(k)&=\frac1N\sum_{i=1}^N\mathbbm{1}\{d_i\geq k\}
=\frac1N\sum_{i=1}^N\mathbbm{1}\{d_i^{\sss{\text{(c)}}}+d_i^{\sss{\text{(b)}}}
\geq k\}\geq\frac1N\sum_{i=1}^N\mathbbm{1}\{d_i^{\sss{\text{(c)}}}\geq k\}\\
 &\geq  \frac\varepsilon N\sum_{i=1}^N\mathbbm{1}\{\eta (s_i-1)\geq k\}
 =\frac{\varepsilon}{n\mathbb{E}[S_n]}\sum_{i=1}^ns_i\mathbbm{1}\{ s_i\geq \frac{k}{\eta}+1\}\\
& \geq \frac{\varepsilon}{n\mathbb{E}[S_n]}\sum_{i=1}^n\frac{k}{\eta}\mathbbm{1}\{ s_i\geq \frac{k}{\eta}+1\}
 =\frac{k\varepsilon}{\eta}\left(1-F_{S_n}(k/\eta+1)\right)\\
 &\geq\varepsilon c_1(\eta)k^{-\tau'+2}=\varepsilon c_1(\eta)k^{-\tau+1} \quad\quad\quad \forall k\leq k_n\eta,
\end{aligned}
\end{equation}
where $c_1(\eta)$ is a constant depending on $\eta$.
Because the communities are simple, $d^{\sss{(c)}}_i\leq s_i-1$ for all vertices. Hence,
\begin{equation}\label{eq:plup}
\begin{aligned}[b]
1-F_{\hat{D}_N}(k)&=\frac1N\sum_{i=1}^N\mathbbm{1}\{d_i^{\sss{\text{(c)}}}+d_i^{\sss{\text{(b)}}}\geq k\}
\leq\frac1N\sum_{i=1}^N\mathbbm{1}\{(s_i-1)+s_iK\geq k\}\\
&=\frac{1}{n\mathbb{E}[S_n]}\sum_{i=1}^{n}s_i\mathbbm{1}\{s_i\geq \frac{k+1}{K+1}\}=\frac{1}{\mathbb{E}[S_n]}\mathbb{E}[S_n\mathbbm{1}\{S_n\geq\frac{k+1}{K+1}\}]\\
&=\frac{1}{\mathbb{E}[S_n]}\left(\frac{k+1}{K+1}\left(1-F_{S_n}\left(\frac{k+1}{K+1}\right)\right)+\sum_{j\geq (k+1)/(K+1)}(1-F_{S_n}(j))\right)\\
&\leq c_2(K) k^{-\tau'+2}=c_2(K) k^{-\tau+1},
\end{aligned}
\end{equation}
for all $k$, with $c_2(K)$ a constant depending on $K$. Here $\sum_{j\geq (k+1)/(K+1)}(1-F_{S_n}(j))\leq c(K)k^{-\tau'+2}$ since $1-F_{S_n}(j)\leq b_2k^{-\tau'+1}$ and $\tau'>2$. Taking the limit of $N\to\infty$, equation~\eqref{eq:pllow} and~\eqref{eq:plup} imply that the degree distribution of $G$ follows a power law with exponent $\tau$, which proves the first part.

Now assume that the degree distribution $\hat{D}_N$ of $G$ obeys a power law with exponent $\tau$, so that $0<b_3<b_4$ and $k_N$ exist such that~\eqref{eq:deflow} and~\eqref{eq:defup} are satisfied. Then the community sizes are minimized if each community is a complete graph, and each vertex has inter-community degree exactly $Ks_i$. Then, $d_v=(s_i-1)+Ks_i$ or $s_i=(d_v+1)/(K+1)$. Hence, the cumulative distribution of $S_n$ in a graph with $n$ communities, $F_{S_n}$, satisfies
\begin{equation}\label{eq:pl2}
\begin{aligned}[b]
1-F_{S_n}(k)&=\frac1n\sum_{i=1}^n\mathbbm{1}\{s_i\geq k\}
\geq\frac{\mathbb{E}[S_n]}{N}\sum_{i=1}^N\frac{K+1}{d_i+1}\mathbbm{1}\left\{\frac{d_i+1}{K+1}\geq k\right\}\\
&= \frac{\mathbb{E}[S_n]}{N}\sum_{i=1}^N\frac{K+1}{d_i+1}\mathbbm{1}\{d_i\geq k(K+1)-1\}
={\mathbb{E}[S_n](K+1)}\mathbb{E}\left[\frac{\mathbbm{1}\{\hat{D}_N\geq k(K+1)-1\}}{\hat{D}_N+1}\right]\\
&={\mathbb{E}[S_n](K+1)}\mathbb{P}(\hat{D}_N>k(K+1)-1)\mathbb{E}[1/(\hat{D}_N+1)\mid\hat{D}_N\geq k(K+1)-1]\\
&\geq{\mathbb{E}[S_n](K+1)}\frac{\mathbb{P}(\hat{D}_N>k(K+1)-1)}{\mathbb{E}[\hat{D}_N+1\mid\hat{D}_N\geq k(K+1)-1]}\\
&={\mathbb{E}[S_n](K+1)}\frac{\mathbb{P}(\hat{D}_N>k(K+1)-1)^2}{\mathbb{E}[(\hat{D}_N+1)\mathbbm{1}\{\hat{D}_N+1\geq k(K+1)\}]},
\end{aligned}
\end{equation}
where we use that $\mathbb{E}[1/X]\geq 1/\mathbb{E}[X]$.
The term in the denominator satisfies
\begin{align}\label{eq:denom}
{\mathbb{E}[(\hat{D}_N+1)\mathbbm{1}\{\hat{D}_N+1\geq k(K+1)\}]}&\leq k(K+1)\mathbb{P}(\hat{D}_N+1>k(K+1))+\sum_{j\geq k(K+1)}\mathbb{P}(\hat{D}_N+1\geq j)\nonumber\\
&\leq C(K)b_4(k(K+1))^{-\tau+2},
\end{align}
for all $k$, where $C(K)$ is a constant depending on $K$. Combining~\eqref{eq:pl2} and~\eqref{eq:denom} yields
\begin{align}
1-F_{S_n}(k)&\geq {\mathbb{E}[S_n](K+1)}\frac{b_3(k(K+1))^{-2(\tau+1)}}{b_4(k(K+1))^{-\tau+2}C(K)}\geq c_3(K)k^{-\tau}=c_3(K)k^{-\tau'+1},
\end{align}
for all $k\leq k_N/(K+1)$, where $c_3(K)$ is a constant depending on $K$. We also have
\begin{equation}
\begin{aligned}[b]
1-F_{S_n}(k)&=\frac{\mathbb{E}[S_n]}{N}\sum_{i=1}^N\frac{1}{s_i}\mathbbm{1}\{s_i\geq k\}
\leq\frac{\mathbb{E}[S_n]}{N}\sum_{i=1}^N\frac{1}{k}\mathbbm{1}\{s_i\geq k\}\leq
\frac{\mathbb{E}[S_n]}{\varepsilon N}\sum_{i=1}^N\frac{1}{k}\mathbbm{1}\{d_i^{\sss{(c)}}\geq k\eta\}\\
&\leq
\frac{\mathbb{E}[S_n]}{\varepsilon N}\sum_{i=1}^N\frac{1}{k}\mathbbm{1}\{d_i^{\sss{(c)}}+d_i^{\sss{(b)}}\geq k\eta\}\leq\frac{1}{\varepsilon}c_4(\eta)k^{-\tau}=\frac{1}{\varepsilon}c_4(\eta)k^{-\tau'+1},
\end{aligned}
\end{equation}
 for all $k$. Taking the limit of $n\to\infty$ proves that $S$ has a power-law distribution with exponent $\tau'=\tau+1$.
\end{proof}
Proposition~\ref{prop:plshift} relates the degree distribution to the community size distribution of the hierarchical configuration model. Under a more restrictive assumption on the inter-community degrees of individual vertices, this also establishes a similar relation between the degree distribution of $G$ and the inter-community degree distribution of the communities in case of a power-law degree distribution:

\begin{corollary}\label{cor:pl}

Let $G$ be a hierarchical configuration model satisfying Conditions\textup{~\ref{cond:graph}} and\textup{~\ref{cond:size}}. Suppose that there exists a $K\geq 0$ such that $d_v^{\sss{(b)}}\leq K$ for all vertices $v$. Furthermore, assume that there exist $\varepsilon,\eta>0$ such that every community of $G$ is $(\eta,\varepsilon)$-dense. Then, the inter-community degree distribution of $G$ cannot have a power-law distribution with exponent smaller than $\tau+1$  if the degree distribution of $G$ follows a power-law distribution with exponent $\tau$, where $\tau>1$.

\end{corollary}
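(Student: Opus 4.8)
The plan is to derive the corollary from Proposition~\ref{prop:plshift} together with the elementary bound $D_n\le KS_n$. First I would note that the hypothesis $d_v^{\sss{(b)}}\le K$ for every vertex $v$ is strictly stronger than the hypothesis of Proposition~\ref{prop:plshift}: since every community has size $s\ge1$ it immediately gives $d_v^{\sss{(b)}}\le K\le Ks$, and summing over the vertices of a community $H=(F,\boldsymbol d)$ it gives $d_H=\sum_{v\in V_F}d_v^{\sss{(b)}}\le K|V_F|$. Because the degree distribution of $G$ is assumed to follow a power law with exponent $\tau>1$, Proposition~\ref{prop:plshift} then tells us that the community size distribution $S$ follows a power law with exponent $\tau'=\tau+1>2$; moreover the relevant upper tail bound $1-F_{S_n}(x)\le b_2x^{-\tau'+1}=b_2x^{-\tau}$ holds for all $x$ and all $n$ with a single constant $b_2$ (this is what is actually established in the second half of the proof of Proposition~\ref{prop:plshift}).

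Next I would transport this tail bound to the community inter-community degrees. Since $D_n=d_{H_n}=\sum_{v\in F_n}d_v^{\sss{(b)}}\le K|F_n|=KS_n$ almost surely, we obtain, for every $k$,
\begin{equation}
1-F_{D_n}(k)=\Prob(D_n\ge k)\le\Prob(S_n\ge k/K)\le b_2(k/K)^{-\tau}=b_2K^{\tau}k^{-\tau}.
\end{equation}
(If $K=0$ then $D_n\equiv0$ and there is nothing to prove, so I may assume $K\ge1$.) Thus the upper tail of the community inter-community degree distribution is no heavier than $k^{-\tau}$, uniformly in $n$.

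Finally I would argue by contradiction. If the inter-community degree distribution of $G$ were a power law with some exponent $\sigma<\tau+1$, then by the definition of a power law there would exist a constant $c_1>0$ and a sequence $k_n\to\infty$ with $1-F_{D_n}(k)\ge c_1 k^{-\sigma+1}$ for all $k\le k_n$. Combining this with the bound above yields $c_1k^{-\sigma+1}\le b_2K^{\tau}k^{-\tau}$, i.e.\ $k^{\tau-\sigma+1}\le b_2K^{\tau}/c_1$, for all $k\le k_n$. Since $\sigma<\tau+1$ we have $\tau-\sigma+1>0$, so taking $k=k_n\to\infty$ makes the left-hand side diverge, a contradiction. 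Hence the inter-community degree distribution cannot be a power law with exponent smaller than $\tau+1$.

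The only point that needs a little care is the uniformity in $n$ of the constant $b_2$ in the power-law tail bound for $S_n$: this is not literally part of the \emph{statement} of Proposition~\ref{prop:plshift}, but it follows from its proof (the bounds there depend only on the power-law constants for the degrees, on $\eta,\varepsilon$, and on $\sup_n\mathbb{E}[S_n]<\infty$), and it is precisely what lets the contradiction go through at finite $n$ along the diverging truncation level $k_n$. Beyond that, the argument is just the observation that a per-vertex inter-community degree bounded by a constant forces $D$ to be at most a constant multiple of the community size $S$, hence no heavier-tailed than $S$.
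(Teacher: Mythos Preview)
Your proof is correct and follows essentially the same approach as the paper: apply Proposition~\ref{prop:plshift} to conclude that $S$ has a power-law tail with exponent $\tau+1$, then use $D_n\le KS_n$ (from $d_v^{\sss{(b)}}\le K$ summed over the community) to transfer the upper tail bound to $D$. The paper's own proof is a terse three sentences amounting to ``$S$ has exponent $\tau+1$ by Proposition~\ref{prop:plshift}, and $D\preceq KS$, so $D$ cannot be heavier''; you have simply spelled out the stochastic-domination step and the contradiction explicitly, and your care about the uniformity in $n$ of the tail constant is appropriate given the paper's finite-$n$ definition of a power law.
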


\begin{proof}

By Proposition~\ref{prop:plshift}, $S$ follows a power law with exponent $\tau+1$. 
Since $d_v^{\sss{(c)}}\leq K$, also $d_{H_i}\leq Ks_i$ for all communities $H_i$. Therefore, $D\preceq KS$, and hence $D$ cannot have a power-law distribution with exponent smaller than $\tau+1$. 
\end{proof}

Corollary~\ref{cor:pl} shows that the degrees between communities have smaller tails than the degrees of the graph. This is consistent with our view of communities being highly connected, while edges between communities are more scarce.
For example, if the degree distribution follows a power law with $\tau \in(2,3)$, then the inter-community degree distribution has an exponent that is at least 3. Therefore, the inter-community degree distribution has finite variance, whereas the degree distribution has infinite variance.  A property of configuration models with power-law exponents in (2,3) is that the probability of obtaining a simple graph vanishes. However, the inter-community connections in the hierarchical configuration model have exponent larger than 3, so that the probability of obtaining a simple graph remains uniformly positive. This suggests that the hierarchical configuration model is able to produce a random graph which has a positive probability of being simple, while the degree distribution has an exponent in (2,3). 

In a companion paper~\cite{stegehuis2015}, we study these power-law relations in more detail, and we show that in the case of communities that are less dense, different relations between $\tau$ and $\tau '$ may hold.

\subsubsection{The probability of obtaining a simple graph}\label{sec:simple}
In the standard configuration model, the probability of obtaining a simple graph converges to $\mathrm{e}^{-\nu/2-\nu^2/4}$ under the condition that $\mathbb{E}[D^2]<\infty$~\cite{hofstad2009}. In the hierarchical configuration model, the probability of obtaining a simple graph is largely dependent on the shapes of the communities. Since we have assumed that the communities are simple, only the inter-community edges can create self-loops and multiple edges.

Suppose that each vertex in a community has at most one half-edge to other communities, i.e.,$d_v^{\sss{(b)}}\in\{0,1\}$. A double edge in the macroscopic configuration model corresponds to a community where two vertices have an edge to the same other community. Since $d_v^{\sss{(b)}}\in\{0,1\}$, a double edge in the macroscopic configuration model cannot correspond to a double edge in the hierarchical configuration model. A self-loop in the macroscopic configuration model corresponds to an edge from one vertex $v$ inside a community to another vertex $w$ inside the same community. This self-loop in the macroscopic configuration model corresponds to a double edge in the hierarchical configuration model if an edge from $v$ to $w$ was already present in the community. Thus, when $d_v^{\sss{(b)}}\in\{0,1\}$ the probability that the macroscopic configuration model is simple is lower bounded by the probability that no self-loops exist in the macroscopic configuration model,
\begin{equation}\label{eq:simple}
\liminf_{n\to\infty}\Prob\left(G_n\text{ simple}\right)\geq \textrm{e}^{-\nu_D/2}.
\end{equation}
In the case of complete graph communities, every self-loop of the macroscopic configuration model corresponds to a double edge in the hierarchical configuration model. Therefore, equality holds when all communities are complete graphs.

\subsection{Clustering coefficient}\label{sec:clust}

The clustering coefficient $C$ of a random graph is defined as
\begin{equation}\label{eq:clustdef}
C=\frac{3\times\text{number of triangles}}{\text{number of connected triples}}.
\end{equation}
A connected triple is a vertex with edges to two different other vertices. Note that the order of the vertices to which the middle vertex is connected does not matter. The clustering coefficient can thus be interpreted as the proportion of connected triples that are triangles.
In the standard configuration model, the clustering coefficient tends to zero when $\mathbb{E}[D^2]<\infty$~\cite{newman2003}. Thus, in the hierarchical configuration model, we expect that the clustering is entirely caused by triangles inside communities. 

Another measure of clustering is the local clustering coefficient for vertices of degree $k$. This coefficient can be interpreted as the fraction of neighbors of degree $k$ vertices that are directly connected and is defined as
\begin{equation}
C_k=\frac{\text{number of pairs of connected neighbors of degree $k$ vertices}}{k(k-1)/2\times \text{number of degree } k\text{ vertices}}.
\end{equation}

As in Section~\ref{sec:degree}, let $n_k^{\sss{(H)}}$ denote the number of vertices in community $H$ with degree equal to $k$. Furthermore, let $P_v^{\sss{(H)}}$ denote the number of pairs of neighbors of a vertex $v\in V_H$ within community $H$ that are also neighbors of each other. We denote the clustering coefficient of community $H$ by $C_H$. Every vertex $v$ in community $H$ has $d_v^{\sss{(c)}}(d_v^{\sss{(c)}}-1)/2$ pairs of neighbors inside $H$. Hence, the total number of connected triples inside the community is given by $\sum_{v\in V_H}d_v^{\sss{(c)}}(d_v^{\sss{(c)}}-1)/2$. Then, by~\eqref{eq:clustdef}, 
\begin{equation}\label{eq:clustcom}
C_H=\frac{2\sum_{v\in V_H}P_v^{\sss{(H)}}}{\sum_{v\in V_H}d_v^{\sss{(c)}}(d_v^{\sss{(c)}}-1)}.
\end{equation}
Proposition~\ref{prop:clust} states that the clustering coefficient of the hierarchical configuration model can be written as a combination of the clustering coefficients inside communities. 
Let $\hat{D}$ denote the asymptotic degree as in Proposition~\ref{prop:deg}.

\begin{proposition}\label{prop:clust}

Let $G$ be a hierarchical configuration model satisfying Conditions\textup{~\ref{cond:graph}} and\textup{~\ref{cond:size}}, $\lim_{n\to\infty}\mathbb{E}[D_n^2]=\mathbb{E}[D^2]<\infty$ and $\lim_{N\to\infty}\mathbb{E}[\hat{D}_N^2]=\mathbb{E}[\hat{D}^2]<\infty$. Then the clustering coefficient $C^{\sss{(n)}}$ and average clustering coefficient for vertices of degree $k$, $C_k^{\sss{(n)}}$, satisfy
\begin{align}
C^{\sss{(n)}}&\inprob C:=\frac{2\sum_{H}\sum_{v\in V_H}P(H)C_H d_v^{\sss{(c)}}(d_v^{\sss{(c)}}-1)}{\sum_{H}\sum_{v\in V_H} P(H)d_v(d_v-1)},\label{eq:clust}\\
C_k^{\sss{(n)}}&\inprob C_k:=\frac{2\sum_H\sum_{v\in V_H:d_v=k}P(H)P_v^{\sss{(H)}}}{k(k-1)}\label{eq:clustk}.
\end{align}
\end{proposition}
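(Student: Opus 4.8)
\emph{Strategy.} The plan is to return to the definition~\eqref{eq:clustdef} and analyze numerator and denominator of $C^{\sss{(n)}}$ separately, writing $C^{\sss{(n)}}=3T_n/U_n$ with $T_n$ the number of triangles and $U_n$ the number of connected triples of the $n$-community graph $G_n$, and then show $T_n/n$ and $U_n/n$ converge in probability and take the ratio. The guiding heuristic is that $\phi(G_n)$ is a configuration model with $\mathbb{E}[D_n^2]\to\mathbb{E}[D^2]<\infty$, hence locally tree-like, so all but $o_\Prob(N)$ triangles of $G_n$ lie inside a single community; the connected triples, on the other hand, are governed by the full degrees $d_v=d_v^{\sss{(b)}}+d_v^{\sss{(c)}}$.

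\emph{Connected triples.} I would start from $U_n=\sum_{v\in[N]}\binom{\deg_{G_n}(v)}{2}$. In $G_n$ the degree of $v$ equals $d_v^{\sss{(b)}}+d_v^{\sss{(c)}}$ unless one of its inter-community half-edges is caught in a self-loop or multiple edge of $\phi(G_n)$; standard configuration-model estimates bound the number of vertices incident to such defects by $O_\Prob(1)$, and, with $\mathbb{E}[\hat{D}_N^2]\to\mathbb{E}[\hat{D}^2]<\infty$ giving uniform integrability of $\hat{D}_N^2$, this yields $U_n=\tfrac12\sum_H n^{\sss{(n)}}_H\sum_{v\in V_H}d_v(d_v-1)+o_\Prob(N)$. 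Dividing by $n$ and combining Condition~\ref{cond:graph}\ref{cond:pn} with the same uniform integrability gives $U_n/n\inprob\tfrac12\sum_H P(H)\sum_{v\in V_H}d_v(d_v-1)$, the denominator in~\eqref{eq:clust}.

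\emph{Triangles.} First I would record the combinatorial fact that a triangle of $G_n$ cannot use exactly one inter-community edge — that would force two of its vertices into two distinct communities at once — so every triangle either lies inside one community or uses at least one inter-community half-edge pairing that is a self-loop, lies in a multiple edge, or closes a short cycle of $\phi(G_n)$. A first-moment estimate, using $\mathbb{E}[D_n^2]\to\mathbb{E}[D^2]<\infty$ to bound the number of such macroscopic defects and Condition~\ref{cond:graph}\ref{cond:S} together with the finiteness of $\mathbb{E}[\hat{D}_N^2]$ to bound how many vertex-triangles each defect can spawn, then shows the second type numbers $o_\Prob(N)$. Hence $T_n=\sum_H n^{\sss{(n)}}_H T_H+o_\Prob(N)$ with $T_H$ the number of triangles of the community $H$. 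Counting each triangle of $H$ once at each of its vertices, $3T_H=\sum_{v\in V_H}P_v^{\sss{(H)}}$, which by~\eqref{eq:clustcom} equals $\tfrac12 C_H\sum_{v\in V_H}d_v^{\sss{(c)}}(d_v^{\sss{(c)}}-1)$; since $C_H\le1$ the summands are dominated by $d_v(d_v-1)$, so the uniform integrability of the previous step lets one pass to the limit and obtain $3T_n/n\inprob\tfrac12\sum_H P(H)C_H\sum_{v\in V_H}d_v^{\sss{(c)}}(d_v^{\sss{(c)}}-1)$. Combining the two limits then gives the stated expression for $C$ in~\eqref{eq:clust}.

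\emph{Local clustering and the main obstacle.} For~\eqref{eq:clustk} I would argue the same way: by the locally-tree-like property, all but $o_\Prob(N)$ of the adjacent pairs of neighbors of degree-$k$ vertices are pairs of intra-community neighbors, so the numerator of $C_k^{\sss{(n)}}$ is $\sum_H n^{\sss{(n)}}_H\sum_{v\in V_H:\,d_v=k}P_v^{\sss{(H)}}+o_\Prob(N)$, while the number of degree-$k$ vertices is $\sum_H n^{\sss{(n)}}_H n_k^{\sss{(H)}}+o_\Prob(N)$ as in the proof of Proposition~\ref{prop:deg}; Condition~\ref{cond:graph} then delivers~\eqref{eq:clustk}. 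The delicate point throughout is precisely the identity $T_n=\sum_H n^{\sss{(n)}}_H T_H+o_\Prob(N)$ (and its degree-$k$ analogue): one must show that triangles threaded through inter-community edges are negligible even though community sizes and intra-community degrees are unbounded. This is where both second-moment hypotheses are essential — $\mathbb{E}[D_n^2]\to\mathbb{E}[D^2]<\infty$ controls the self-loops, multiple edges and short cycles of $\phi(G_n)$, and $\mathbb{E}[\hat{D}_N^2]\to\mathbb{E}[\hat{D}^2]<\infty$ keeps the per-community triple and triangle counts uniformly integrable — so that a vanishing fraction of macroscopic defects contributes only a vanishing fraction of microscopic triangles.
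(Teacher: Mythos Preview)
Your approach is essentially the paper's: lower-bound the triangle count by the intra-community triangles, classify the remainder as arising from self-loops, multiple edges, or triangles of $\phi(G_n)$, and show these contribute $o_\Prob(n)$. For the step you flag as the main obstacle, the paper's concrete bound on the self-loop contribution is $\sum_{i=1}^{W^{(n)}}(s_{\mathcal{I}_i}-2)/n\le W^{(n)}\max_{i}s_i/n\inprob 0$, using that $W^{(n)}$ has a Poisson limit and that $\mathbb{E}[S_n]\to\mathbb{E}[S]<\infty$ forces $\max_i s_i=o(n)$; also, the paper treats the triple count $U_n=\tfrac12\sum_v d_v(d_v-1)$ as deterministic (pairs of half-edges at each vertex), so your $o_\Prob(N)$ correction for self-loops and multi-edges in the denominator is not needed.
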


\begin{proof}

In the hierarchical configuration model, the number of triples is deterministic. A vertex $v$ with degree $d_v$ has $d_v(d_v-1)/2$ pairs of neighbors. Thus, the total number of connected triples in $G$ is given by $\sum_{H}n^{\sss{(n)}}_{H}\sum_{v\in V_H}d_v(d_v-1)/2$, where $n^{\sss{(n)}}_{H}$ is the number of type $H$ communities. 

Triangles in $G$ can be formed in several ways. First of all, a triangle can be formed by three edges inside the same community. In this case the triangle in $G$ is formed by a triangle in one of its communities $H$. 
Another possibility to create a triangle is shown in Figure~\ref{fig:self1}. The black edges show edges inside a community, and the dashed edges are formed by edges in the macroscopic configuration model. This triangle is formed by two intra-community edges, and one edge of the macroscopic configuration model. Figure~\ref{fig:self2} shows that this inter-community edge is a self-loop of the macroscopic configuration model. One self-loop of the macroscopic configuration model can create multiple triangles; at most $s_i-2$. Figure~\ref{fig:double1} shows the case where only one edge of the triangle is an intra-community edge. Figure~\ref{fig:double2} shows that the two inter-community edges must form a double edge in the macroscopic configuration model. The last possibility is that all three edges of the triangle are inter-community edges as in Figure~\ref{fig:triang1}. This corresponds to a triangle in the macroscopic configuration model (Figure~\ref{fig:triang2}).

\begin{figure}[tb]
\centering
\begin{subfigure}[b]{0.10\textwidth}
\centering
\includegraphics[width=\textwidth]{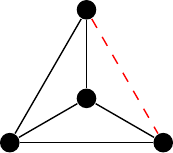}
\captionsetup{width=1.15\textwidth}
\caption{self-loop}
\label{fig:self1}
\end{subfigure}
\hspace{0.8cm}
\begin{subfigure}[b]{0.13\textwidth}
\centering
\includegraphics[width=0.47\textwidth]{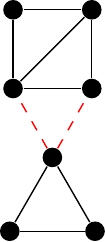}
\captionsetup{width=1.4\textwidth}
\caption{double edge}
\label{fig:double1}
\end{subfigure}
\hspace{0.8cm}
\begin{subfigure}[b]{0.15\textwidth}
\centering
\includegraphics[width=\textwidth]{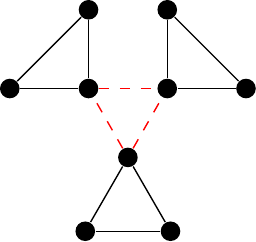}
\caption{triangle}
\label{fig:triang1}
\end{subfigure}
\caption{Possibilities to form triangles in the hierarchical configuration model that are not entirely inside communities. Edges between communities (dashed) that add clustering correspond to either a self-loop, a double edge or a triangle in the macroscopic configuration model.}
\label{fig:clustcm}
\end{figure}
\begin{figure}[tb]
\centering
\begin{subfigure}[b]{0.14\textwidth}
\centering
\includegraphics[width=\textwidth]{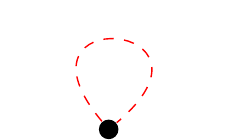}
\caption{self-loop}
\label{fig:self2}
\end{subfigure}
\hspace{0.8cm}
\begin{subfigure}[b]{0.1\textwidth}
\centering
\includegraphics[width=0.4\textwidth]{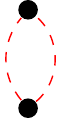}
\captionsetup{width=1.6\textwidth}
\caption{double edge}
\label{fig:double2}
\end{subfigure}
\hspace{1.2cm}
\begin{subfigure}[b]{0.09\textwidth}
\centering
\includegraphics[width=0.7\textwidth]{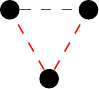}
\captionsetup{width=1.3\textwidth}
\caption{triangle}
\label{fig:triang2}
\end{subfigure}
\caption{Figure~\ref{fig:clustcm} on macroscopic level. The inter-community edges that add clustering correspond to either a self-loop, a double edge or a triangle of the macroscopic configuration model.}
\label{fig:clust2}
\end{figure}

Hence, either the triangle was present in $H$ already, or it corresponds to a double edge, self-loop or triangle in $\phi(G)$. Here we recall that $\phi(G)$ denotes the macroscopic configuration model. Let the number of self-loops, double edges and triangles in $\phi(G)$ be denoted by $W^{\sss{(n)}}, M^{\sss{(n)}}$ and $T^{\sss{(n)}}$ respectively. Denote the number of triangles entirely in communities of $G$ by $T^{\sss{(n)}}_{\text{com}}$. The number of triangles in $G$ is bounded from below by $T_{\text{com}}^{\sss{(n)}}$.
Using~\eqref{eq:clustcom}, we obtain that
	\begin{equation}\label{eq:Tcom}
	3T_{\text{com}}^{\sss{(n)}}=\sum_{H}\sum_{v\in V_H}n^{\sss{(n)}}_{H} P_v^{\sss{(H)}}=\sum_{H}\sum_{v\in V_H}n_{H} C_H d_v^{\sss{(c)}}(d_v^{\sss{(c)}}-1).
	\end{equation}
Since $P_v^{\sss{(H)}}\leq d_v^2$, and $\lim_{N\to\infty}\mathbb{E}[\hat{D}_N]=\mathbb{E}[\hat{D}]$,
\begin{equation}\label{eq:Cnlow}
\begin{aligned}[b]
C^{\sss{(n)}}&=
\frac{3\times\text{number of triangles in }G}{\text{number of connected triples in }G}
\geq \frac{3T^{\sss{(n)}}_{\text{com}}/n}{\sum_{H}n^{\sss{(n)}}_{H}\sum_{v\in V_H}d_v(d_v-1)/(2n)}\\
&\inprob 
\frac{2\sum_{H}\sum_{v\in V_H}P(H)C_H d_v^{\sss{(c)}}(d_v^{\sss{(c)}}-1)}{\sum_{H}\sum_{v\in V_H} P(H)d_v(d_v-1)}.
\end{aligned}
\end{equation}
The sums in~\eqref{eq:Cnlow} are finite due to the assumptions $\mathbb{E}[D^2]<\infty$ and $\mathbb{E}[\hat{D}^2]<\infty$.

For the upper bound, we use that every self-loop on the community level adds at most $s_i-2$ triangles, and every triangle and double edge on the community level adds at most one triangle. This yields the inequality
\begin{equation}\label{eq:ntriang}
\text{number of triangles } G\leq T^{\sss{(n)}}_{\text{com}}+M^{\sss{(n)}}+S^{\sss{(n)}}+\sum_{i=1}^{W^{\sss{(n)}}}(s_{\mathcal{I}_i}-2).
\end{equation}
Here the sum is over all communities where a self-loop is present, written as $(\mathcal{I}_i)_{i=1}^{W^{\sss{(n)}}}$. If a community has multiple self-loops, then the community is counted multiple times in the sum.
By~\cite[Theorem~5]{ball2010}
\begin{equation}\label{eq:cout}
\left(M^{\sss{(n)}}+T^{\sss{(n)}}\right)/n\inprob 0.
\end{equation}
in a configuration model with $\mathbb{E}[D^2]<\infty$. 

The last term in~\eqref{eq:ntriang} satisfies
\begin{align}\label{eq:cself}
\frac{\sum_{i=1}^{W^{\sss{(n)}}}(s_{\mathcal{I}_i}-2)}{n}&=\frac{W^{\sss{(n)}}\mathbb{E}[S_n-2\mid\text{self-loop}]}{n}\leq\frac{W^{\sss{(n)}}\max_{i\in[n]}s_i}{n}=\frac{W^{\sss{(n)}}o(n)}{n}\inprob 0.
\end{align}
The last equality follows because $\mathbb{E}[S_n]\to\mathbb{E}[S]<\infty$, which implies that 
\begin{equation}
\lim_{k\to\infty}\lim_{n\to\infty}\frac{1}{n}\sum_{j\in[n]}s_j\mathbbm{1}\{s_j>k\}=0,
\end{equation}
so that $\max_i s_i=o(n)$. The convergence follows since the number of self-loops in a configuration model converges to a Poisson distribution with mean $\nu_D$~\cite[Proposition~7.11]{hofstad2009}, combined with $\mathbb{E}[D^2]<\infty$.

Combining~\eqref{eq:cout} and~\eqref{eq:cself} yields
\begin{align}\label{eq:clow}
C^{\sss{(n)}}&\leq\frac{3T^{\sss{(n)}}_{\text{com}}+3(M^{\sss{(n)}}+T^{\sss{(n)}})+3\sum_{i=1}^{W^{\sss{(n)}}}(s_{\mathcal{I}_i}-2)}{\sum_{H}n^{\sss{(n)}}_{H}\sum_{v\in V_H}d_v(d_v-1)/2} \nonumber\\
&\inprob 
\frac{2\sum_{H} P(H)\sum_{v\in V_H}C_Hd_v^{\sss{(c)}}(d_v^{\sss{(c)}}-1)}{\sum_{H} P(H)\sum_{v\in V_H}d_v(d_v-1)}.
\end{align}
Together with~\eqref{eq:Cnlow} this proves~\eqref{eq:clust}.

To prove~\eqref{eq:clustk}, a similar argument can be used. The number of connected neighbors of vertices of degree $k$ is bounded from below by $\sum_Hn_H^{\sss{(n)}}\sum_{v\in V_H:d_v=k}P_v^{\sss{(H)}}$, and from above by
\begin{equation}
\sum_Hn_H^{\sss{(n)}}\sum_{v\in V_H:d_v=k}P_v^{\sss{(H)}}+M^{\sss{(n)}}+S^{\sss{(n)}}+\sum_{i=1}^{W^{\sss{(n)}}}(s_{\mathcal{I}_i}-2).
\end{equation}
Then, dividing by $k(k-1)n\hat{p}_k^{\sss{(n)}}$, where $\hat{p}_k^{\sss{(n)}}$ is the probability of having a vertex of degree $k$, and taking the limit yields~\eqref{eq:clustk}. Note that the assumption that $\mathbb{E}[\hat{D}^2]<\infty$ is not necessary for this clustering coefficient, since $P_v/k(k-1)\leq 1$ for all vertices of degree $k$.
\end{proof}

\section{Percolation}\label{sec:perc}

We now consider bond percolation on $G$, where each edge of $G$ is removed independently with probability $1-\pi$. We are interested in the critical percolation value and the size of the largest percolating cluster. Percolation on the configuration model was studied in~\cite{fountoulakis2007, janson2008}. Here we extend these results to the hierarchical configuration model.

Percolating $G$ is the same as first percolating only the edges within communities, and then percolating the edges between communities.
For percolation inside a community, only the edges inside a community are removed with probability $1-\pi$. The half-edges attached to a community are not percolated. Let $H_\pi$ denote the subgraph of $H$, where each edge of $H$ has been deleted with probability $1-\pi$. When percolating a community, it may split into different connected components. Let $g(H,v,l,\pi)$ denote the probability that the component of $H_\pi$ containing $v$ has inter-community degree $l$. If $H_{\pi}$ is still connected, then the component containing $v$ still has $d_H$ outgoing edges for all $v\in V_H$. If $H_\pi$ is disconnected, then this does not hold. If one of the components of $H_\pi$ has an outgoing edge, each vertex in another component of $H_\pi$ cannot reach that edge. Therefore, a vertex in this other component is connected to less than $d_H$ outgoing edges. 

To compute the size of the largest percolating cluster, we need the following definitions:
\begin{align}
p_k' &:=\frac{\sum_{H}\sum_{v\in V_H}d_v^{\sss{(b)}}P(H)g(H,v,k,\pi)/k}{\sum_{H}\sum_{v\in V_H}\sum_l d_v^{\sss{(b)}}P(H)g(H,v,l,\pi)/l},\label{eq:pk}\\
h(z)&:=\sum_{k=1}^\infty k p'_kz^{k-1},\\
\lambda&:=\sum_{k=0}^\infty kp_k'.
\end{align}
The probabilities $(p'_k)_{k\geq 0}$ can be interpreted as the asymptotic probability distribution of the inter-community degrees of the connected parts of communities after percolation inside communities. Then $h(z)$ and $\lambda$ are the derivative of the probability generating function and the mean of the inter-community degrees of the components of communities after percolation respectively.

Define $D_\pi^*$ as the number of inter-community edges after entering a percolated community from a randomly chosen edge. The probability of entering at vertex $v$ in community $H$, equals $P(H){d_v^{\sss{(b)}}}/{\mathbb{E}[D]}$. After entering $H$ at vertex $v$, there are in expectation $\sum_{k=1}^{D_H-1}kg(H,v,k+1,\pi)$ edges to other communities (since one edge was used to enter $H$). Hence, 
\begin{equation}\label{eq:pic1}
	\mathbb{E}[D_\pi^*]=\frac{1}{\mathbb{E}[D]}\sum_{H}P(H)\sum_{v\in V_H}d_v^{\sss{(b)}}\sum_{k=1}^{D_H-1}kg(H,v,k+1,\pi).
\end{equation}
After percolating the inter-community edges, a fraction of $\pi$ of these edges remain. Thus, after percolating all edges, when entering a community, the expected number of outgoing edges excluding the traversed edge  is $\pi\mathbb{E}[D_\pi^*]$. We expect the critical value of $\pi$ to satisfy $\pi\mathbb{E}[D_\pi^*]=1$, i.e., the expected number of edges to other communities is one, after entering a community from a randomly chosen edge. The next theorem states that this is indeed the critical percolation value:

\begin{theorem}\label{thm:perc}
Assume $G$ is a hierarchical configuration model satisfying Conditions\textup{~\ref{cond:graph}} and\textup{~\ref{cond:size}}. The critical value of the percolation parameter $\pi_c$ of $G$ satisfies
 \begin{equation}\label{eq:pic}
 \pi_c=\frac{1}{\mathbb{E}[D^*_{\pi_c}]}.
 \end{equation}
Furthermore:
\begin{enumerate}
\item
For $\pi>\pi_c$, the size of the largest component of the percolated graph satisfies
\begin{equation}\label{eq:percsize}
\frac{v(\mathscr{C}_{\textup{max}})}{N}\inprob\frac{1}{\mathbb{E}[S]}\sum_{k=1}^\infty\sum_{H}\sum_{v\in V_H} P(H)g(H,v,k,\pi)\left(1-(1-\sqrt{\pi}+\sqrt{\pi}\xi)^k\right)>0,
\end{equation}
where $\xi$ is the unique solution in $(0,1)$ of 
\begin{equation}\label{eq:xi}
\sqrt{\pi}h(1-\sqrt{\pi}+\sqrt{\pi}\xi)+(1-\sqrt{\pi})\lambda=\lambda\xi.
\end{equation}
\item
For $\pi\leq\pi_c$, $v(\mathscr{C}_{\textup{max}})/N\inprob 0$. 
\end{enumerate}
\end{theorem}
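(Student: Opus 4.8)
The plan is to reduce Theorem~\ref{thm:perc} to bond percolation on an ordinary configuration model, for which the threshold and the size of the giant are classical (\cite{fountoulakis2007, janson2008}), and then to pass from ``super-vertices'' back to vertices exactly as in the proof of Theorem~\ref{thm:size}. Since bond percolation deletes edges independently, and the edge set of $G$ splits into intra- and inter-community edges, percolating $G$ with parameter $\pi$ is the same as first deleting each intra-community edge with probability $1-\pi$ and then each inter-community edge with probability $1-\pi$. After the first stage each community $H$ breaks into connected pieces, which I will call \emph{super-vertices}; the piece containing $v$ has inter-community degree $k$ with probability $g(H,v,k,\pi)$. The pairing of the inter-community half-edges is untouched by the first stage and was uniform to begin with, so, conditionally on the (random) multiset of super-vertex inter-community degrees, the macroscopic graph on the super-vertices is exactly a configuration model, and the second stage is ordinary bond percolation on it.

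Next I would identify the limiting degree distribution of the super-vertices as $(p_k')_{k\ge1}$ of \eqref{eq:pk}, with mean $\lambda<\infty$. Writing the number of super-vertices of inter-community degree $k$ produced by one community as $\sum_{v}\tfrac{d_v^{\sss{(b)}}}{k}\mathbbm{1}\{v\text{'s piece has inter-community degree }k\}$ and averaging over the $n$ communities, Condition~\ref{cond:graph}\ref{cond:pn} gives convergence of the community-type averages, while Condition~\ref{cond:graph}\ref{cond:S} and Condition~\ref{cond:size}\ref{enum:ED} supply the uniform integrability needed to interchange the sum over $H$ with the limit (each such count is dominated by the community size, and $\mathbb{E}[S_n\mathbbm{1}\{S_n>K\}]\to0$). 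I expect this to be the step requiring the most care: the degree sequence of the super-vertex configuration model is itself random, the communities may be arbitrarily large, and only a first moment of $S$ is assumed, so concentration of the empirical degree distribution and of $\mathbb{E}[D'_{n'}]$ must come from truncating large communities rather than from a second moment bound.

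With the super-vertex configuration model in hand, I would invoke \cite{fountoulakis2007, janson2008}: bond percolation with parameter $\pi$ on it has a giant component iff $\pi\,\nu_{D'}>1$, where $\nu_{D'}=\mathbb{E}[D'(D'-1)]/\mathbb{E}[D']$. A short computation using $\sum_H\sum_{v\in V_H}d_v^{\sss{(b)}}P(H)\sum_k g(H,v,k,\pi)=\mathbb{E}[D]$ (so that the denominator of \eqref{eq:pk} equals $\mathbb{E}[D]/\lambda$) identifies $\nu_{D'}$ with $\mathbb{E}[D^*_\pi]$ of \eqref{eq:pic1}, so the threshold is the $\pi_c$ solving $\pi_c\mathbb{E}[D^*_{\pi_c}]=1$, i.e.\ \eqref{eq:pic}. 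Existence and uniqueness of $\pi_c\in(0,1]$ follow because $\pi\mapsto\pi\,\mathbb{E}[D^*_\pi]$ is continuous (each $g(H,v,k,\pi)$ is a polynomial in $\pi$, and the sum over $H$ is dominated) and strictly increasing (decreasing $\pi$ inside a community only breaks it further, so $D^*_\pi$ is stochastically increasing in $\pi$), running from $0$ at $\pi=0$ to $\mathbb{E}[D^*_1]=\nu_D$ at $\pi=1$; Condition~\ref{cond:size}\ref{enum:PD} excludes the degenerate all-degree-$2$ situation, and if $\nu_D\le1$ this forces $\pi_c=1$, consistent with Theorem~\ref{thm:size}\ref{enum:nusmall} at $\pi=1$.

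For $\pi>\pi_c$ I would use the half-edge device for bond percolation: keep each inter-community half-edge independently with probability $\sqrt{\pi}$ and reattach each discarded half-edge to a fresh degree-$1$ dummy vertex, so that the percolated model on the super-vertices is the subgraph induced on the real super-vertices of a configuration model in which a real super-vertex of inter-community degree $k$ has thinned degree $\mathrm{Bin}(k,\sqrt{\pi})$. Running the branching-process exploration and letting $\xi$ be the probability that a half-edge fails to reach the giant — following a retained half-edge, its partner is a real super-vertex only with probability $\sqrt{\pi}$, else a dummy dead end, and from a real super-vertex one continues through its remaining thinned half-edges — size-biasing the super-vertex degree and using $h$ and $\lambda$ yields precisely \eqref{eq:xi}; this equation always has the root $\xi=1$, its derivative there equals $\pi\nu_{D'}-1$ up to a positive factor, and convexity of $h$ then gives a unique solution in $(0,1)$ exactly when $\pi>\pi_c$. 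Hence a super-vertex of inter-community degree $k$ lies in the giant with probability $1-(1-\sqrt{\pi}+\sqrt{\pi}\xi)^k$. Finally, a vertex $v$ of $H$ is in the giant of the percolated $G$ iff its super-vertex is, which, given its piece has inter-community degree $k$ (probability $g(H,v,k,\pi)$, and independent of macroscopic connectivity), happens with probability $1-(1-\sqrt{\pi}+\sqrt{\pi}\xi)^k$; summing over $v\in V_H$, weighting by $P_n(H)$, dividing by $\mathbb{E}[S_n]$ and letting $n\to\infty$ gives \eqref{eq:percsize}, the interchange of limits and positivity being handled as in \eqref{eq:giantcomm}. The bound \eqref{eq:Clb} — a super-vertex component with $o_{\mathbb{P}}(n)$ super-vertices has $o_{\mathbb{P}}(N)$ vertices, using $\mathbb{E}[S_n\mathbbm{1}\{S_n>K\}]\to0$ — then shows both that the giant super-vertex component is the genuine giant (so $v(\mathscr{C}_2)/N\inprob0$) and, for $\pi\le\pi_c$, that every component is $o_{\mathbb{P}}(N)$. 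The main obstacles are thus Step two (the super-vertex configuration model with its random, possibly heavy-tailed degree sequence) and the size-weighting bookkeeping in the last step, which is exactly what makes \eqref{eq:percsize} differ from the fraction of super-vertices in the giant.
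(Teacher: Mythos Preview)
Your proposal is correct and follows essentially the same three-step strategy as the paper: percolate intra-community edges to obtain a configuration model on super-vertices with asymptotic degree law $(p'_k)$ (the paper's Lemma~\ref{lem:pk}), apply Janson's percolation results~\cite{janson2008} with the $\sqrt{\pi}$ half-edge thinning to that model, and then translate super-vertex counts back to vertex counts via the truncation argument of~\eqref{eq:Clb}. Your vertex-by-vertex accounting in the last step is a slightly cleaner variant of the paper's route through $\mathbb{E}[S_\pi\mid\text{inter-community degree }k]$, and your explicit monotonicity argument for existence and uniqueness of $\pi_c$ is a detail the paper leaves implicit.
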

Note that for the standard configuration model,~\eqref{eq:pic} simplifies to $\pi_c=\mathbb{E}[D]/\mathbb{E}[D(D-1)]$, since in that case, for a vertex of degree $d$, $g(v,v,k,\pi)=\mathbbm{1}_{\{k=d\}}$. 
Furthermore, $\pi_c=0$ when for any $\pi>0$, the expected number of edges to other communities is infinite when entering a community via a uniformly chosen edge. 

\begin{proof}
The proof of Theorem~\ref{thm:perc} has a similar structure as the proof of~\cite[Theorem 1]{coupechoux2014}. 
The proof consists of three key steps:
\begin{enumerate}[label=\normalfont({\alph*})]
	\item
	First, each edge within each community is removed with probability $1-\pi$. This may split the community into several connected components. We find the distribution of the inter-community degrees of the connected components of the percolated communities, which is given by $p'_k$, as in~\eqref{eq:pk}. We identify vertices that are in the same connected component of a community. Lemma~\ref{lem:pk} shows that this results in a graph $\phi(G_\pi)$ that is distributed as a configuration model with asymptotic degree probabilities $p_k'$ (recall~\eqref{eq:pk}).
	\item
	We then remove each edge between communities with probability $1-\pi$. Results of~\cite{janson2008} can now be applied to the configuration model with distribution $p_k'$ to find the critical percolation value and the size of the giant hierarchical component.
	\item
	Next, we translate the number of communities in the largest percolated hierarchical component to the number of vertices. Then we show that this is indeed the largest component of the percolated graph.	
\end{enumerate}

\paragraph*{Auxiliary graph.}

We introduce the auxiliary graph $\phi(\bar{G})$, defined for every subgraph $\bar{G}\subset G$, and obtained by identifying the vertices that belonged to the same community in $G$, and are connected in $\bar{G}$~\cite{coupechoux2014}. Hence, in $\phi(\bar{G})$ every vertex represents a connected part of a community. Figure~\ref{fig:phi} illustrates $\phi(\bar{G})$. For a hierarchical configuration model $G$, the graph $\phi(G)$ is a configuration model where communities of $G$ are collapsed into single vertices. 
\begin{figure}[tb]
	\centering
	\includegraphics[width=0.4\textwidth]{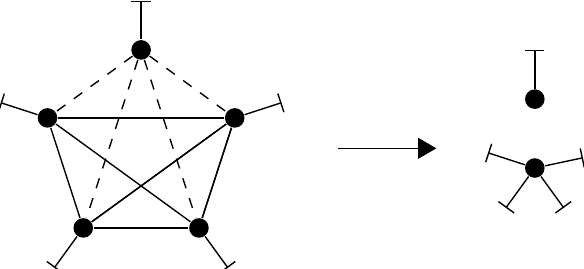}
	\caption{Left, a subgraph $\bar{H}$  of a community $H$, dashed lines are not in $\bar{H}$, but are present in $H$, solid lines are present in $\bar{H}$. The graph 				$\phi(\bar{H})$ is shown on the right.}
	\label{fig:phi}
\end{figure}

\begin{lemma}\label{lem:pk}
	Let $G$ be a hierarchical configuration model satisfying Conditions\textup{~\ref{cond:graph}} and\textup{~\ref{cond:size}}. Let $G_\pi$ denote the subgraph of $G$ where each edge inside each community is removed with probability $1-\pi$. Then the graph $\phi(G_\pi)$ is distributed as a configuration model with degree probabilities $p'_k$ given in~\eqref{eq:pk}.
\end{lemma}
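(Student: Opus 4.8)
The plan is to peel off the three independent sources of randomness in $G_\pi$ and treat them in turn: the community shapes $(F_n,\boldsymbol{d}_n)$, the intra-community percolations, and the uniform pairing of inter-community half-edges that underlies the macroscopic configuration model $\phi(G)$. The first move is to work \emph{conditionally} on the collection of community shapes and on the outcome of the intra-community percolation. Since this percolation only deletes edges of the graphs $F$ and leaves every inter-community half-edge \--- and the rule pairing them uniformly at random \--- untouched, I would argue that conditionally $\phi(G_\pi)$ is exactly a configuration model. Indeed, after percolation a community $H$ splits into connected components $C_1,\dots,C_m$, with $C_j$ carrying $D(C_j)=\sum_{v\in C_j}d_v^{\sss{(b)}}$ inter-community half-edges; the vertices of $\phi(G_\pi)$ are precisely these components (the ones with $D(C_j)=0$ being isolated vertices, harmless for connectivity and percolation), and the half-edge set of $\phi(G_\pi)$ is literally the inter-community half-edge set of $G$, still matched by a uniform pairing independent of everything done inside communities. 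This is the ``collapsing'' construction of~\cite{coupechoux2014}, and it identifies $\phi(G_\pi)$ with the configuration model on the (random) degree sequence $(D(C_j))$; its total degree stays even because we conditioned $\sum_i d_{H_i}$ to be even.

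It then remains to check that this degree sequence satisfies the usual regularity, i.e.\ its empirical degree distribution converges in probability to $(p'_k)_{k\ge1}$ and its mean degree to $\lambda$. The combinatorial input is the half-edge counting identity, valid for $k\ge1$,
\begin{equation*}
\#\{j:\,D(C_j)=k\}=\frac1k\sum_{v\in V_H}d_v^{\sss{(b)}}\mathbbm{1}\{D(C_\pi(v))=k\},
\end{equation*}
where $C_\pi(v)$ is the component of $v$ in $H_\pi$; averaging over the intra-community percolation turns the right-hand side into $\tfrac1k\sum_{v\in V_H}d_v^{\sss{(b)}}g(H,v,k,\pi)$. Writing $N_k^{\sss{(n)}}$ for the number of degree-$k$ vertices of $\phi(G_\pi)$, we have $N_k^{\sss{(n)}}=\sum_{i=1}^n\#\{j:D(C_j^{(i)})=k\}$, a sum of contributions that are independent given the shapes, each bounded by $d_{H_i}/k$. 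I would prove $\tfrac1nN_k^{\sss{(n)}}\inprob\sum_HP(H)\tfrac1k\sum_{v\in V_H}d_v^{\sss{(b)}}g(H,v,k,\pi)=:a_k$ by truncating at $d_H\le M$: on the truncated part the summands are bounded, so a conditional second-moment estimate gives concentration around the conditional mean $\sum_{H:d_H\le M}P_n(H)\tfrac1k\sum_vd_v^{\sss{(b)}}g(H,v,k,\pi)$, and Condition~\ref{cond:graph}\ref{cond:pn} (which upgrades to $\sum_H|P_n(H)-P(H)|\inprob0$) lets us pass $n\to\infty$ inside this bounded sum; the discarded tail is at most $\tfrac1k\mathbb{E}[D_n\mathbbm{1}\{D_n>M\}]$, uniformly small in $n$ since $\mathbb{E}[D_n]\to\mathbb{E}[D]<\infty$ by Condition~\ref{cond:size}\ref{enum:ED} forces $(D_n)$ to be uniformly integrable. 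The identical argument applied to the number of components carrying at least one half-edge (bounded by $d_{H_i}$) gives $\tfrac1n\sum_{l\ge1}N_l^{\sss{(n)}}\inprob\sum_HP(H)\sum_{l\ge1}\tfrac1l\sum_vd_v^{\sss{(b)}}g(H,v,l,\pi)=:Z$, which is positive, and dividing yields $N_k^{\sss{(n)}}/\sum_{l\ge1}N_l^{\sss{(n)}}\inprob a_k/Z=p'_k$. Finally, since the half-edge count is the unchanged $\sum_id_{H_i}=n\mathbb{E}[D_n]$, the mean degree is $n\mathbb{E}[D_n]/\sum_lN_l^{\sss{(n)}}\inprob\mathbb{E}[D]/Z=\lambda=\sum_kkp'_k$. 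These are exactly the statements needed to call $\phi(G_\pi)$ a configuration model with degree probabilities $p'_k$.

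The first paragraph is the conceptually clean part: it is the standard collapsing bijection, and the independence of the two percolation stages is built into the model. I expect the genuine work to sit in the law-of-large-numbers of the second paragraph, because neither the community sizes nor the inter-community degrees are bounded, so the truncation argument and the uniform integrability drawn from Conditions~\ref{cond:graph} and~\ref{cond:size} do the heavy lifting, and some care is needed when interchanging the $n\to\infty$ limit with the sum over the (possibly infinitely many) community shapes. A minor point to clear along the way is that components with no inter-community half-edges contribute only isolated vertices to $\phi(G_\pi)$, so it is consistent to define $(p'_k)$ only for $k\ge1$ (note that the weight $d_v^{\sss{(b)}}$ in~\eqref{eq:pk} annihilates vertices with $d_v^{\sss{(b)}}=0$, and any vertex with $d_v^{\sss{(b)}}\ge1$ lies in a component of inter-community degree at least $1$, so $\sum_{k\ge1}p'_k=1$).
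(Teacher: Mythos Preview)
Your proposal is correct and follows essentially the same route as the paper: both proofs rest on the half-edge counting identity $N^{\sss{(n)}}(H,k,\pi)=\tfrac1k\sum_{v\in V_H}d_v^{\sss{(b)}}M^{\sss{(n)}}(H,v,k,\pi)$, a law of large numbers for these counts, and the observation that the inter-community pairing remains uniform after intra-community percolation. The only notable difference is that you control the tail of the sum over community shapes via the uniform integrability of $D_n$ drawn from Condition~\ref{cond:size}\ref{enum:ED}, whereas the paper bounds $\sum_kN^{\sss{(n)}}(H,k,\pi)/n\le P_n(H)s_H$ and invokes Condition~\ref{cond:graph}\ref{cond:S} instead; both work, and your version is arguably tidier since it also yields the mean-degree convergence $\lambda$ explicitly.
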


\begin{proof}
	We independently delete each edge within each community with probability $1-\pi$. We want to find the degree distribution of $\phi(G_{\pi})$. Let $M^{\sss{(n)}}(H,v,k,\pi)$ denote the number of connected components of the percolated versions of community $H$ containing vertex $v$ and having inter-community degree $k$. Each community of shape $H$ has an equal probability that the component containing $v$ has inter-community degree $k$ given by $g(H,v,k,\pi)$. Furthermore, the probability that a randomly chosen community has shape $H$ is independent of the probability that the inter-community degree is $k$ after percolation in a community of shape $H$. Therefore, given the number of type $H$ communities $n^{\sss{(n)}}_H$, $M^{\sss{(n)}}(H,v,k,\pi)\sim \text{Bin}(n^{\sss{(n)}}_H,g(H,v,k,\pi))$. Thus, by the weak law of large numbers,
	\begin{equation}\label{eq:mn} 
	\frac{M^{\sss{(n)}}(H,v,k,\pi)}{n}=\frac{M^{\sss{(n)}}(H,v,k,\pi)}{n^{\sss{(n)}}_H}\frac{n^{\sss{(n)}}_H}{n}\inprob P(H)g(H,v,k,\pi).
	\end{equation}
	Let $N^{\sss{(n)}}(H,k,\pi)$ denote the total number of connected components of the percolated versions of $H$ having inter-community degree $k$. This number can be obtained by counting the number of half-edges of all connected components of percolated graphs with inter-community degree $k$, and then dividing by $k$. Each vertex $v$ in such a percolated community contributes $d_v^{\sss{(b)}}$ to the inter-community degree of the percolated community. Thus,
	\begin{equation}\label{eq:nn}
	N^{\sss{(n)}}(H,k,\pi)=\sum_{v\in{V_H}}d_v^{\sss{(b)}}M^{\sss{(n)}}(H,v,k,\pi)/k.
	\end{equation}
	Let $\tilde{n}$ denote the number of vertices in $\phi(G_{\pi})$, so that $\tilde{n}=\sum_{H}\sum_k N^{\sss{(n)}}(H,k,\pi)$. Similarly, the number vertices of degree $k$ in $\phi(G_\pi)$ is denoted by $\tilde{n}_k=\sum_{H}N^{\sss{(n)}}(H,k,\pi)$. 
	Furthermore, $\sum_k N^{\sss{(n)}}(H,k,\pi)/n\leq P_n(H)s_H$, and therefore by Condition~\ref{cond:graph},~\eqref{eq:mn} and~\eqref{eq:nn},
\begin{equation}\label{eq:ntilde}
\tilde{n}/n\inprob \sum_{H}\sum_{k=1}^{D_H}\sum_{v\in V_H} d_v^{\sss{(b)}}P(H)g(H,v,k,\pi)/k.
\end{equation}
Therefore also
	\begin{equation}
	\frac{N^{\sss{(n)}}(H,k,\pi)}{\tilde{n}}=\frac{N^{\sss{(n)}}(H,k,\pi)/n}{\tilde{n}/n}\inprob\frac{\sum_{v\in V_H}d_v^{\sss{(b)}}P(H)g(H,v,k,\pi)/k}{\sum_{H}\sum_{v\in V_H}\sum_l d_v^{\sss{(b)}}P(H)g(H,v,l,\pi)/l}.
	\end{equation}
	Hence, the proportion of vertices in $\phi(G_\pi)$ with degree $k$ tends to
	\begin{equation}\label{eq:pd}
	\frac{\tilde{n}_k}{\tilde{n}} =\sum_{H} \frac{N^{\sss{(n)}}(H,k,\pi)}{\tilde{n}}\inprob
	p_k' .
	\end{equation}
	Since the edges between communities in $G$ were paired at random, this means that the graph $\phi(G_\pi)$ is distributed as a configuration model with degree probabilities $p_k'$.
\end{proof}

Using Lemma~\ref{lem:pk}, we now prove Theorem~\ref{thm:perc}:\\
	\leavevmode\\
\textbf{Step (a).}
Lemma~\ref{lem:pk} proves that $\phi(G_\pi)$ is distributed as a configuration model with degree probabilities $p'_k$. 
\\\\
\textbf{Step (b).}
$\phi(G_\pi)$ and $\phi(G)$ have $\sum_kk\tilde{n}_k$ and $\sum_k kn_k$ half-edges, respectively. Since only edges inside communities have been deleted, $\sum_k kn_k$ equals $\sum_kk\tilde{n}_k$. By Condition~\ref{cond:size}\ref{enum:ED}, $\sum_k kn_k/n\to\mathbb{E}[D]$. Furthermore, by~\eqref{eq:ntilde} $\tilde{n}/n$ converges, 
hence $\sum_k k\tilde{n}_k/\tilde{n}$ converges. Therefore we can apply Theorem~3.9 from~\cite{janson2008}, which states that after percolation, a configuration model with degree probabilities $p_k'$ has a giant component if
\begin{equation}
	\pi\sum_k k(k-1)p_k'>\sum_k kp_k'.
\end{equation}
From Theorem~\ref{thm:size} we know that a giant hierarchical component is also a giant component in $G$, and a hierarchical component of size $o_{\mathbb{P}}(n)$ is a component of size $o_{\mathbb{P}}(N)$. Hence, the giant component emerges precisely when the giant hierarchical component emerges. 
Substituting~\eqref{eq:pd} gives for the critical percolation value $\pi_c$ that,
\begin{equation}
\begin{aligned}[b]
\pi_c&=\frac{\sum_k kp_k'}{\sum_k k(k-1)p_k'}=
\frac{\sum_{H}\sum_{v\in V_H}d_v^{\sss{(b)}}\sum_{k=1}^{D_H} P(H)g(H,v,k,\pi_c)k/k}{\sum_{H}\sum_{v\in V_H}d_v^{\sss{(b)}}\sum_{k=1}^{D_H} P(H)g(H,v,k,\pi_c)k(k-1)/k}\\
&=
\frac{\sum_{H}\sum_{v\in V_H}d_v^{\sss{(b)}}P(H)}{\sum_{H}\sum_{v\in V_H}d_v^{\sss{(b)}}\sum_{k=1}^{D_H} P(H)g(H,v,k,\pi_c)(k-1)}\\
&=
\frac{\mathbb{E}[D]}{\sum_{H}\sum_{v\in V_H}\sum_{k=1}^{D_H-1} d_v^{\sss{(b)}}P(H)g(H,v,k+1,\pi_c)k}\\
&=\frac{1}{\mathbb{E}[D_{\pi_c}^*]}.
\end{aligned}
\end{equation}
\textbf{Step (c).}
Now assume that $\pi>\pi_c$. The number of degree $r$ vertices in the largest component of $\phi(G_\pi)$ satisfies $v_r(\mathscr{C}^{\sss{\text{H}}}_{\text{max}})/\tilde{n}\inprob\sum_{l\geq r}b_{lr}(\sqrt{\pi})p_l'(1-\xi^r)$~\cite{janson2008}, with $b_{lr}(\sqrt{\pi})={l \choose r}\sqrt{\pi}^r(1-\sqrt{\pi})^{l-r}$ is the probability that a binomial with parameters $l$ and $\sqrt{\pi}$ takes value $r$, and $\xi$ is as in~\eqref{eq:xi}. To translate the number of percolated communities in the largest component into the number of vertices in the largest component, we want to know the expected number of vertices in the largest component that are in a percolated community with inter-community degree $k$. The size of a percolated community is independent of being in the largest hierarchical component, but does depend on the inter-community degree of the percolated community. The total number of vertices in connected percolated components with inter-community degree $k$ is given by $\sum_{H}\sum_{v\in V_H}M^{\sss{(n)}}(H,v,k,\pi)$, and the total number of percolated communities of inter-community degree $k$ is given by $\sum_{H}N^{\sss{(n)}}(H,k,\pi)$. Furthermore, $\sum_{v\in V_H}M^{\sss{(n)}}(H,v,k,\pi)/n\leq P_n(H)s_H$. Hence, by Condition~\ref{cond:size}, the expected size of a percolated community, given that it has inter-community degree $k$, satisfies
\begin{equation}
\begin{aligned}[b]
	\mathbb{E}[S_\pi\mid\text{inter-community degree } k]
	&=\frac{\sum_{H}\sum_{v\in V_H}M^{\sss{(n)}}(H,v,k,\pi)}{\sum_{H}N^{\sss{(n)}}(H,k,\pi)}\\
	&=\frac{\sum_{H}\sum_{v\in V_H}M^{\sss{(n)}}(H,v,k,\pi)/n}{\sum_{H}N^{\sss{(n)}}(H,k,\pi)/n}\\
	&\inprob\frac{\sum_{H}\sum_{v\in V_H} P(H)g(H,v,k,\pi)}{\sum_{H}\sum_{v\in V_H}d_v^{\sss{(b)}}P(H)g(H,v,k,\pi)/k}.
	\end{aligned}
\end{equation}
Since $v_r(\mathscr{C}_{\textup{max}})/N\leq \hat{p}_r^{\sss{(n)}}$, which sums to one, we can compute the asymptotic number of vertices in the largest component of $\phi(G_\pi)$ as
\begin{align}
\frac{v(\mathscr{C}_{\textup{max}})}{N}&=\sum_{r=0}^\infty\frac{v_r(\mathscr{C}_{\textup{max}})/\tilde{n}}{N/n\cdot n/\tilde{n}}\inprob
\sum_{r=0}^\infty\frac{\sum_{l\geq r}b_{lr}(\sqrt{\pi})p_l'(1-\xi^r)\mathbb{E}[S_\pi\mid\text{inter-community degree } l]}{\mathbb{E}[S]/\sum_{H}\sum_{v\in V_H}\sum_k d_v^{\sss{(b)}}P(H)g(H,v,k,\pi)/k}\nonumber\\
&=
\frac{\sum_{r=0}^\infty\sum_{l\geq r}b_{lr}(\sqrt{\pi})(1-\xi^r)p_l'}{\mathbb{E}[S]/\sum_{H}\sum_{v\in V_H}\sum_k d_v^{\sss{(b)}}P(H)g(H,v,k,\pi)/k}\frac{\sum_{H}\sum_{v\in V_H} P(H)g(H,v,l,\pi)}{\sum_{H}\sum_{v\in V_H}d_v^{\sss{(b)}}P(H)g(H,v,l,\pi)/l}\nonumber\\
&=
\frac{\sum_{l=0}^\infty\sum_{r=0}^lb_{lr}(\sqrt{\pi})(1-\xi^r)\frac{p'_l}{p'_l}\sum_{H}\sum_{v\in V_H}P(H)g(H,v,l,\pi)}{\mathbb{E}[S]}\nonumber\\
&=
\frac{\sum_{l=0}^\infty (1-(1-\sqrt{\pi}+\sqrt{\pi}\xi)^l)\sum_{H}\sum_{v\in V_H}P(H)g(H,v,l,\pi)}{\mathbb{E}[S]}.
\end{align}
Any other component of $\phi(G_\pi)$ has size $o_\Prob(\tilde{n})$ by~\cite[Theorem~3.9]{janson2008}. 
As shown in the proof of Theorem~\ref{thm:size}, any component of size $o_{\Prob}(\tilde{n})$ in $\phi(G_\pi)$ is a component of size $o_{\Prob}(N)$ in the total graph. Hence, w.h.p. $\mathscr{C}_{\textup{max}}$ is the largest component of the percolated graph $G_\pi$. 

When $\pi<\pi_c$, the largest component of $\phi(G_\pi)$ satisfies $v(\mathscr{C}^{\sss{\text{H}}}_{\text{max}})/\tilde{n}\inprob 0$~\cite{janson2008}. Again, by the analysis of Theorem~\ref{thm:size}, this component is of size $o_\Prob(N)$ in the original graph.
\end{proof}

Equation~\eqref{eq:xi} also has an intuitive explanation.
Let $Q$ be the distribution of the community inter-community degrees after percolation when following a randomly chosen half-edge. Then we can interpret $\xi$ as the extinction probability of a branching process with offspring distribution $Q$. Percolating the inter-community edges with probability $1-\pi$ is the same as deleting each half-edge with probability $1-\sqrt{\pi}$. Then, with probability $1-\sqrt{\pi}$ the randomly chosen half-edge is paired to a deleted half-edge, in which case the branching process goes extinct. With probability $\sqrt{\pi}$, the half-edge leads to a half-edge which still exists after percolation, and leads to a community. The probability generating function of the number of half-edges pointing out of this community before percolating the half-edges is $\frac1\lambda h(\xi)$. Since the number of half-edges after percolation is binomial given the number of half-edges that were present before percolation, the probability generating function of the number of half-edges pointing out of a community entered by a randomly chosen half-edge is $\frac1\lambda h(1-\sqrt{\pi}+\sqrt{\pi}\xi)$. Combining this yields~\eqref{eq:xi}. 

\paragraph{The case $\mathbb{E}[D^2]=\infty$.}\label{sec:infD}

In the standard configuration model $\pi_c=0$ precisely when $\mathbb{E}[D^2]=\infty$. In the hierarchical configuration model, this may not be true, since it is possible to construct communities with large inter-community degrees, while all individual vertices have a low degree. An example of such a community structure is the hierarchical configuration model, where each community is a line graph $H_L$ of $L$ vertices with probability $\bar{p}_L$, where each vertex has inter-community degree one. Figure~\ref{fig:ED2} illustrates $H_L$ for $L=5$. We assume that $\bar{p}_L$ obeys the power law $\bar{p}_L=cL^{-\alpha}$, with $\alpha\in(2,3)$. Then $\mathbb{E}[D]<\infty$, but $\mathbb{E}[D^2]=\infty$. Hence, communities may have large inter-community degrees. However, $G$ is a 3-regular graph, so no individual vertex has high degree. From this fact, we can already conclude that $\pi_c\neq0$. Suppose $\pi_c<\frac12$. Then, after percolation every vertex has less than two expected neighbors. Hence, there is no giant component w.h.p.
We can also use Theorem~\ref{thm:perc} to show that $\pi_c\neq0$. We compute the denominator of~\eqref{eq:pic}, and show that it is finite. We have
\begin{equation}
	\sum_{v\in V_H}d_v^{\sss{(b)}}g(H_L,v,k,\pi_c)=\begin{cases}
	2k\pi_c^{k-1}(1-\pi_c)+k\pi_c^{k-1}(1-\pi_c)^2(L-k-1) &\text{if } k<L,\\
	k\pi_c^{k-1}&\text{if } k=L.
	\end{cases}
\end{equation}
This gives
\begin{equation}
	\sum_{k=1}^{L-1}k\sum_{v\in V_H}d_v^{\sss{(b)}}g(H_L,v,k+1,\pi_c)=\frac{2\pi_c(\pi_c^L+L(1-\pi_c))-1}{(1-\pi_c)^2}.
\end{equation}
Using that $\bar{p}_l=cl^{-\alpha}$ gives for~\eqref{eq:pic1}
\begin{align}
	\mathbb{E}[D_{\pi_c}^*]&=\frac{1}{\mathbb{E}[D]}\sum_{H}P(H)\sum_{v\in V_H}d_v^{\sss{(b)}}\sum_{k=1}^{D_H-1}kg(H,v,k+1,\pi_c)\nonumber\\
	&=\frac{1}{\mathbb{E}[D]}\sum_{L=1}^\infty cL^{-\alpha} \frac{2\pi_c(\pi_c^L+L(1-\pi_c))-1}{(1-\pi_c)^2}\nonumber\\
	&=\frac{1}{\mathbb{E}[D]}\frac{2\pi_c}{(1-\pi_c)^2}\left(-1+(1-\pi_c)\mathbb{E}[D]+\sum_{L=1}^\infty c\pi_c^LL^{-\alpha}\right)\label{eq:edinf}.
\end{align}
From~\eqref{eq:edinf} we see that $\pi_c=0$ is not a solution of~\eqref{eq:pic}. Hence, $\pi_c\neq 0$, even though $\mathbb{E}[D^2]=\infty$.

\begin{figure}[tb]
\centering
\begin{subfigure}[t]{0.3\linewidth}
\centering
\includegraphics[width=0.8\textwidth]{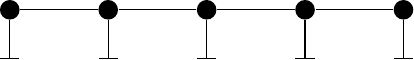}
\caption{ }
\label{fig:ED2}
\end{subfigure}
\hspace{0.2cm}
\begin{subfigure}[t]{0.3\linewidth}
\centering
\includegraphics[width=0.40\textwidth]{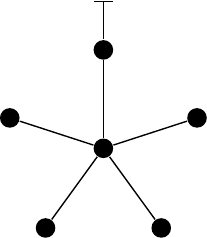}
\caption{ }
\label{fig:infp}
\end{subfigure}
\hspace{0.2cm}
\begin{subfigure}[t]{0.3\linewidth}
\centering
\includegraphics[width=0.48\textwidth]{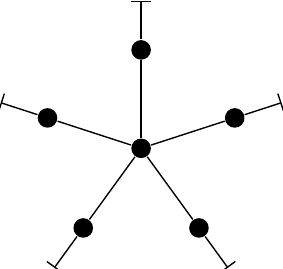}
\caption{ }
\label{fig:star}
\end{subfigure}
\caption{Communities with $L=5$.}
\end{figure}

\paragraph{Infinite second moment of degree.}\label{sec:infp}

When the second moment of the degree distribution as defined in Proposition~\ref{prop:deg} is infinite, $\pi_c$ also does not have to be zero. It is possible to `hide' all vertices of high degree inside communities that have small inter-community degrees. The small inter-community degrees make it difficult to leave the community in percolation. One example of such a community structure is the case in which each community is a star-shaped graph with $L$ endpoints with probability $p_L$. One vertex in the graph has inter-community degree one, and all the other vertices have inter-community degree zero. Figure~\ref{fig:infp} illustrates the star-shaped graph for $L=5$. Since each community has only one outgoing edge, there cannot be a giant component in $G$. We can also see this from Theorem~\ref{thm:size}, since $\mathbb{E}[D]=\mathbb{E}[D^2]=1$. By Proposition~\ref{prop:deg}, the degree distribution equals
\begin{equation}
\hat{p}_k=\begin{cases} \left(\sum_{L=1}^\infty (L-1)p_L+p_1\right)/\mathbb{E}[S] &\text{if } k=1,\\
 \left(1+p_2\right)\mathbb{E}[S] &\text{if } k=2,\\
 p_k/\mathbb{E}[S] &\text{if } k>2.
\end{cases}
\end{equation}
When $p_l$ is a probability distribution with infinite second moment, the second moment of $\hat{p}_l$ is also infinite. Hence, the degree distribution of the hierarchical configuration model $G$ has infinite second moment, while there is no giant component, so that certainly $\pi_c\neq 0$.

\paragraph{A sufficient condition for $\pi_c=0$.}
\mbox{}
\newline
By~\eqref{eq:pic},
\begin{equation}
\begin{aligned}[b]
\pi_c&=\frac{\mathbb{E}[D]}{\sum_{G}P(H)\sum_{v\in V_H}d_v^{\sss{(b)}}\sum_{k=1}^{D_H-1}kg(H,v,k+1,\pi_c)}\\
&\leq\frac{\mathbb{E}[D]}{\sum_{G}P(H)\sum_{v\in V_H}{(d_v^{\sss{(b)}}})^2}
\end{aligned}
\end{equation}
Hence, $\sum_{H}\sum_{v\in V_H}P(H){(d_v^{\sss{(b)}}})^2=\infty$ is a sufficient condition for $\pi_c=0$. This condition can be interpreted as an infinite second moment of the inter-community degrees of individual vertices. However, it is not a necessary condition. It is possible to construct a community where all individual vertices have a small inter-community degree, but are connected to a vertex with high degree. Consider for example the star community of Figure~\ref{fig:star}, with one vertex in the middle, linked to $L$ other vertices. The $L$ other vertices have inter-community degree one, and the middle vertex has inter-community degree zero, hence all vertices have a small inter-community degree. However, the middle vertex can have a high degree. Let each community be a star-shaped community with $L$ outgoing edges with probability $\bar{p}_L$. We can calculate that $\pi_c=\sum_L\bar{p}_LL(L-1)\pi_c^2$. Hence, if we choose $\bar{p}_L$ with finite first moment and infinite second moment, $\pi_c=0$. However, $\sum_{H}\sum_{v\in V_H}{(d_v^{\sss{(b)}})}^2=\sum_LL \bar{p}_L =\mathbb{E}[D]<\infty$. 

\section{Existing graph models with a community structure}\label{sec:exex}

In this section, we show how three existing random graph models with community structure fit within the hierarchical configuration model. 

\subsection{Trapman's household model}\label{sec:house1}

Trapman~\cite{trapman2007} replaces vertices in a configuration model by households in the form of complete graphs, such that the degree distribution of the resulting graph is $p_k$. To achieve this, each community is a single vertex of degree $k$ with probability $(1-\gamma)p_k$, or a complete graph of size $k$ with probability $\gamma\bar{p}_k$. Here $\bar{p}_k$, the probability that a certain clique has degree $k$, is given by 
\begin{equation}\label{eq:pkhouse}
\bar{p}_k=k^{-1}p_k\mathbb{E}[W^{-1}]^{-1},
\end{equation}
where $W$ is a random variable satisfying $\Prob(W=k)=p_k$. Each vertex of the complete graph has one edge to another community. Figure~\ref{fig:house} illustrates a household of size 5. This model is a special case of the hierarchical configuration model with
\begin{equation}\label{eq:hierarchical configuration modeltrap}
H_i=\begin{cases}
(K_{k},(1,\dots,1)) &\text{w.p. }\gamma \bar{p}_k,\\
(v,(k)) &\text{w.p. }(1-\gamma)p_k,
\end{cases}
\end{equation}
where $K_k$ is a complete graph on $k$ vertices. 

We now check when~\eqref{eq:hierarchical configuration modeltrap} satisfies Conditions~\ref{cond:graph} and~\ref{cond:size}.
The assumption $\Prob(D=2)<1$ is satisfied if and only if $p_2<1$. The expected inter-community degree of a community is given by
\begin{equation}
\mathbb{E}[D]=(1-\gamma)\sum_k kp_k+\gamma\sum_k k\bar{p}_k=(1-\gamma)\mathbb{E}[W]+\frac{\gamma}{\mathbb{E}[W^{-1}]}.
\end{equation}
Hence, $\mathbb{E}[D]<\infty$ if $\mathbb{E}[W]<\infty$ and $\mathbb{E}[W^{-1}]\neq 0$. By Jensen's inequality, $\mathbb{E}[W^{-1}]\geq\mathbb{E}[W]^{-1}>0$, hence $\mathbb{E}[D]<\infty$ if and only if $\mathbb{E}[W]<\infty$.
For every community in this model, its size is smaller than or equal to its inter-community degree, so that also $\mathbb{E}[S]<\infty$ if $\mathbb{E}[D]<\infty$. 
Thus, Conditions~\ref{cond:graph} and~\ref{cond:size} hold if $\mathbb{E}[W]<\infty$ and $p_2<1$. Under these conditions we can apply the results for the hierarchical configuration model as derived in Sections~\ref{sec:gen} and~\ref{sec:perc}.

Suppose that $p_k$ follows a power law with exponent $\alpha$. Then $\bar{p}_k$ follows a power law with exponent $\alpha-1$, and the distribution of the inter-community degrees $D$ is a mixture of a power law with exponent $\alpha$ and a power law with exponent $\alpha-1$ by~\eqref{eq:pkhouse}. Thus, the power-law shift of Corollary~\ref{cor:pl} does not occur, since in this household model, the single vertex communities do not satisfy $d_v^{\sss{(b)}}\leq K$. 
For Trapman's household model, the power-law shift only occurs if $\gamma=1$, in which case all communities are households.

\begin{figure}[tb]
	\centering
	\begin{minipage}[t]{0.45\linewidth}
		\centering
		\includegraphics[width=0.4\textwidth]{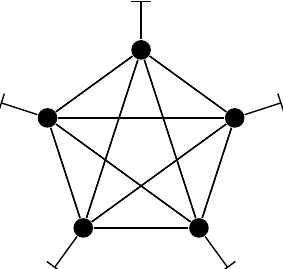}
		\caption{A household of size 5}
		\label{fig:house}
	\end{minipage}
\end{figure}

\subsection{Lelarge and Coupechoux' household model}\label{sec:house2}

Another model that takes complete graphs as communities is the model of Coupechoux and Lelarge~\cite{coupechoux2014}. This model is very similar to Trapman's model. Again, each community is either a complete graph or a single vertex. In contrast to~\cite{trapman2007}, the probability that a certain community is a clique is dependent on the degree of the clique. 
Each vertex of degree $k$ in the macroscopic configuration model is replaced by a complete graph with probability $\gamma_k$. This graph can be modeled as a hierarchical configuration model with
\begin{equation}
H_i=\begin{cases}
(K_{k},(1,\dots,1)) &\text{w.p. }\gamma_k \bar{p}_k\\
(v,(k)) &\text{w.p. }(1-\gamma_k)\bar{p}_k,
\end{cases}
\end{equation}
where $(\bar{p}_k)_{k\geq 1}$ is a probability distribution.
Since the inter-community degrees of all communities have distribution $\Prob(D=k)=\bar{p}_k$, Condition~\ref{cond:size} holds if the probability distribution $\bar{p}_k$ has finite mean and $\bar{p}_2<1$. The size of a community is always smaller than or equal to its inter-community degree, so that also $\mathbb{E}[S]<\infty$ if $\bar{p}_k$ has finite mean. Thus, Conditions~\ref{cond:graph} and~\ref{cond:size} hold if $\bar{p}_k$ has finite mean and $\bar{p}_2<1$.

If these conditions on $\bar{p}_k$ hold, then the degree distribution ${p}_k$ of the resulting graph can be obtained from Proposition~\ref{prop:deg} as 
\begin{equation}
	{p}_k=\frac{(k\gamma_k+(1-\gamma_k))\bar{p}_k}{\sum_{i\geq 0}(i\gamma_i+(1-\gamma_i))\bar{p}_i}.
\end{equation}
Suppose that $\gamma_k\geq\gamma>0$. Then, in contrast to Trapman's household model in Section~\ref{sec:house1}, the degree distribution of the edges between communities, $\bar{p}_k$, follows a power law with exponent $\alpha+1$ if the degree distribution ${p}_k$ follows a power law with exponent $\alpha$.

As an example of such a household model, consider a graph with ${p}_3=a$ and ${p}_6=1-a$ and a tunable clustering coefficient. We take $\gamma_6=0$, but increase $\gamma_3$, while the degree distribution remains the same. Thus, the graph consists of only single vertices of degree 6, single vertices of degree 3 and triangle communities. Since we increase $\gamma_3$, the number of triangles increases, so that also the clustering coefficient increases. 
Figures~\ref{fig:cla75} and~\ref{fig:cla95} show the size of the giant component under percolation for different values of the clustering coefficient using $a=0.75$ and $a=0.95$ respectively. In the case where $a=0.75$, clustering decreases the value of $\pi_c$, whereas if $a=0.95$, clustering increases the value of $\pi_c$. This illustrates that the influence of clustering on bond percolation of a random graph is non-trivial. In two similar random graph models, introducing clustering has a different effect.

\begin{figure}[tb]
	\centering
	\begin{subfigure}[b]{0.45\textwidth}
		\centering
		\includegraphics[width=\textwidth]{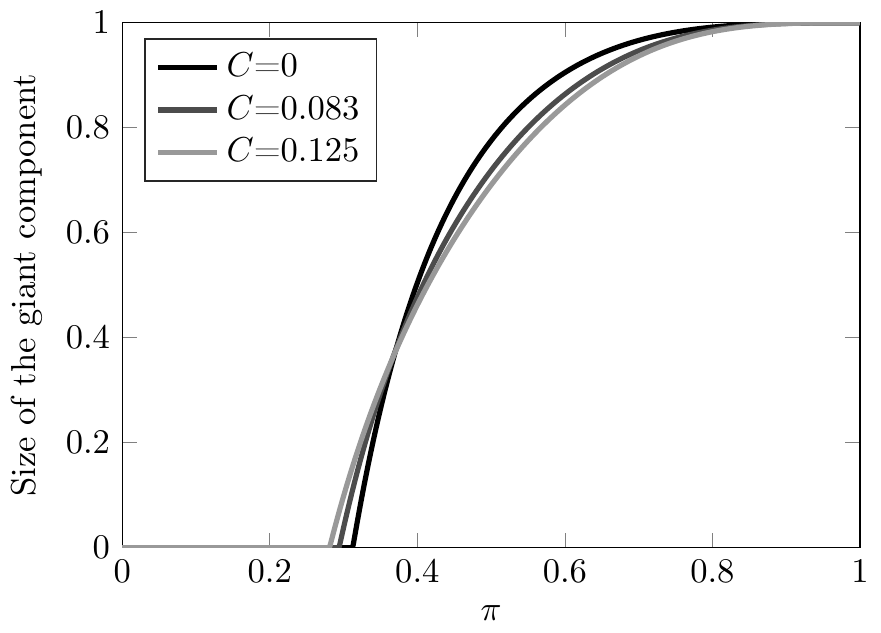}
		\caption{ $a=0.75$}
		\label{fig:cla75}
	\end{subfigure}
	\hspace{0.2cm}
	\begin{subfigure}[b]{0.45\textwidth}
		\centering
		\includegraphics[width=\textwidth]{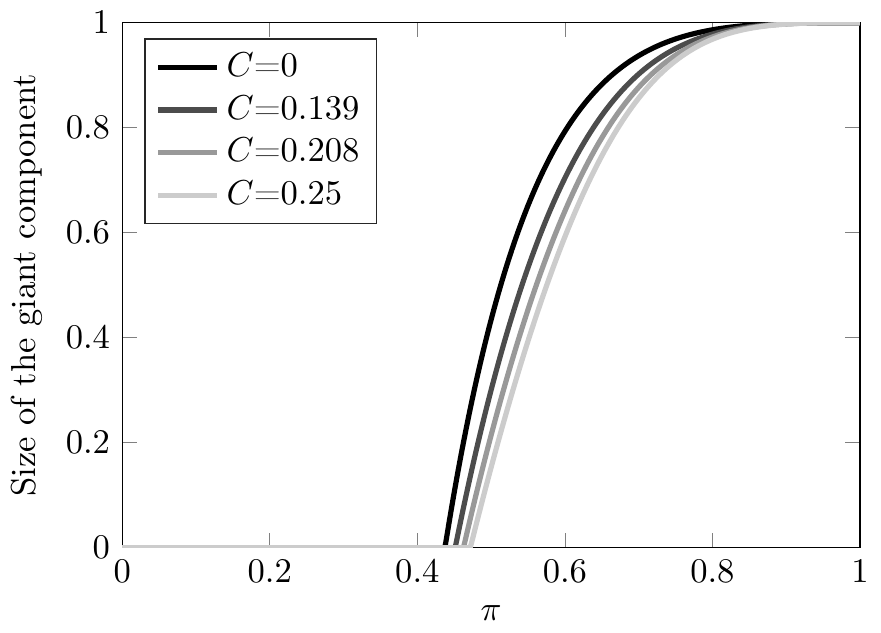}
		\caption{$a=0.95$}
		\label{fig:cla95}
	\end{subfigure}
	\caption{The size of the giant component after bond percolation with probability $\pi$ in a household model with $p_3=a$ and $p_6=1-a$ for various clustering coefficients $C$. If $a=0.75$, adding clustering decreases the critical percolation value, whereas if $a=0.95$, adding clustering increases the critical percolation value.}
\end{figure}

\subsection{Configuration model with triangles}\label{sec:newman}

A third random graph model with clustering is the model by Newman~\cite{newman2009}. In this model, each vertex $v$ has an edge-degree $d_v^{\sss{(1)}}$ and a triangle degree $d_v^{\sss{(2)}}$, denoting the number of triangles that the vertex is part of. Then a random graph is formed by pairing edges at random and pairing triangles at random. 
Even though this model does not explicitly replace vertices in a configuration model by communities, it is also a special case of the hierarchical configuration model if some conditions on the degrees are satisfied. The communities in this model are the connected components consisting only of triangles. Figure~\ref{fig:newmexample} shows two possible realizations of such communities.

From results derived in~\cite{newman2009}, we can find the probability generating function $h_r(z)$ of the number of vertices in triangles that can be reached from a uniformly chosen triangle, and the probability generating function $h_{S^*}(z)$ of the size of the triangle component of a randomly chosen vertex, that together satisfy
\begin{align}
h_r(z)=zg_q(h_r^2(z)),\quad h_{S^*}(z)=zg_p(h_r^2(z)),
\end{align}
where $g_q$ is the probability generating function of the size-biased distribution of the triangle degrees, and $g_p$ the probability generating function of the triangle degree distribution. In the hierarchical configuration model, $h_{S^*}(z)$ can be interpreted as the probability generating function of the size-biased community sizes. Thus, the mean size-biased community size is given by
\begin{equation}
\mathbb{E}[S^*]=1+\frac{2\mathbb{E}[D^{\sss{(2)}}]}{3-2\mathbb{E}[D^{\sss{(2)}*}]},
\end{equation}
where $D^{\sss{(2)}*}$ is the size-biased distribution of the triangle degrees. Since $\mathbb{E}[S^*]\geq\mathbb{E}[S]$, Condition~\ref{cond:graph}\ref{cond:S} is satisfied if $\mathbb{E}[D^{\sss{(2)}*}]<\frac32$.

The mean inter-community degree of a community is given by
\begin{equation}
\mathbb{E}[D]=\lim_{n\to\infty}\frac{\sum_{i=1}^n\sum_{v\in G_i}d^{\sss{(1)}}_v}{n}=\lim_{n\to\infty}\frac{\sum_{i=1}^Nd_i^{\sss{(1)}}/N}{n/N}=\mathbb{E}[S]\mathbb{E}[D^{\sss{(1)}}].
\end{equation}
Hence, Conditions~\ref{cond:graph} and~\ref{cond:size} are satisfied if $\mathbb{E}[D^{\sss{(2)}*}]<\frac32$ and $\mathbb{E}[D^{\sss{(1)}}]<\infty$.
When these conditions are satisfied, the condition for emergence of a giant component is
\begin{equation}
\frac{\mathbb{E}[{D^{\sss{(1)}}}^2]\mathbb{E}[S]-\mathbb{E}[D^{\sss{(1)}}]\mathbb{E}[S]}{\mathbb{E}[D^{\sss{(1)}}]\mathbb{E}[S]}=\frac{\mathbb{E}[{D^{\sss{(1)}}}^2]-\mathbb{E}[D^{\sss{(1)}}]}{\mathbb{E}[D^{\sss{(1)}}]}>1.
\end{equation}
Therefore, as long as $\mathbb{E}[D^{\sss{(2)}*}]<\frac32$, the emergence of the giant component only depends on the edge degree distribution.

To apply the results of the hierarchical configuration model, we need the probability $P(H)$ that a randomly chosen community is of type $H$. This probability is not easy to obtain, but it can be approximated using a branching process. The branching process starts at a vertex, and explores the component of triangles. The first generation of the branching process has $Z_0=1$. The first offspring, $Z_1$ is distributed as $2D^{\sss{(2)}}$. All other offspring, $Z_i$ for $i>1$ is distributed as $\sum_{j=1}^{Z_{i-1}}2(D_j^{\sss{(2)}*}-1)$. Here $D_j^{\sss{(2)}*}$ are independent copies, distributed as $D^{\sss{(2)}*}$. In this branching process approximation, cycles of triangles are ignored. The size-biased probability of having a specific community $H$ can be obtained by summing the probabilities of the possible realizations of the branching process when exploring graph $H$. This probability is size-biased, since when starting at an arbitrary vertex, the probability of starting in a larger community is higher. This probability then needs to be transformed to the probability of obtaining graph $H$.

To compute the size of the giant component after percolation from~\eqref{eq:percsize}, $g(H,v,k,\pi)$ is needed for every community shape $H$. This is difficult to obtain, since it largely depends on the shape of the community, and there are infinitely many possible community shapes. Figure~\ref{fig:newmexample} shows an example of why the shape of a community matters. When percolating the left community, the probability that the red vertex is connected to $k$ other vertices is smaller than for the graph on the right. For this reason, we approximate~\eqref{eq:percsize} numerically using the branching process described above.  
In~\cite{newman2009}, Newman gives expressions for the size of the largest percolating cluster. Figure~\ref{fig:percnewm} compares the size of the giant component computed in that way with a numerical approximation of~\eqref{eq:percsize}. We see that indeed the equations from~\cite{newman2009} give the same results for the largest percolating cluster as~\eqref{eq:percsize}.

\begin{figure}[htb]
\begin{minipage}[t]{0.5\linewidth}
	\centering
	\includegraphics[width=\textwidth]{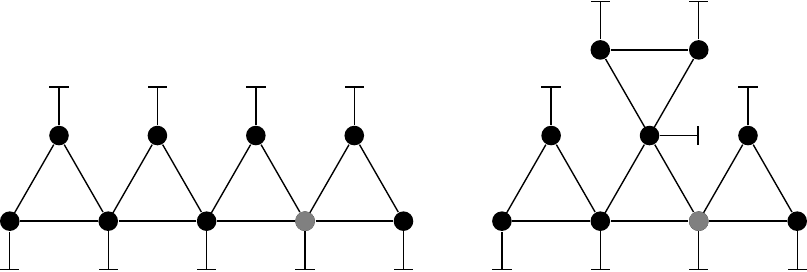}
	\caption{Two possible communities with 4 triangles. In the left community, 6 other nodes can be reached from the gray node within 2 steps, in the right community 8 nodes.}
	\label{fig:newmexample}
\end{minipage}
\hspace{0.2cm}
\begin{minipage}[t]{0.45\linewidth}
	\centering
	\includegraphics[width=\textwidth]{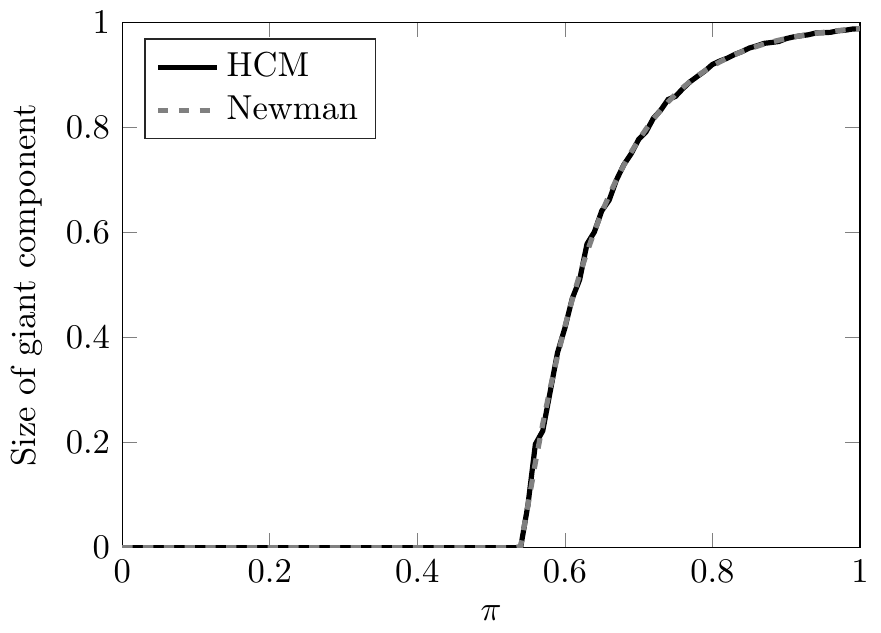}
	\caption{The size of the giant percolating cluster calculated by~\eqref{eq:percsize} (hierarchical configuration model) and from results in~\cite{newman2009} (Newman) agree.} 
	\label{fig:percnewm}
\end{minipage}
\end{figure}

\section{Stylized networks}\label{sec:sty}

In this section, we study two stylized examples of community structures. The first example gives a community type that decreases the critical percolation value compared to a configuration model with the same degree distribution. The second example increases the critical percolation value when compared to a configuration model.

\subsection{A community structure that decreases $\pi_c$}

As an example of a community structure that decreases $\pi_c$, we consider a hierarchical configuration model where with probability $\phi$ a community is given by $H_1$: a path of $L$ vertices, with a half-edge at each end of the path as illustrated in Figure~\ref{fig:line}. With probability $1-\phi$ the community is $H_2$: a vertex with three half-edges. 
The degree distribution of this hierarchical configuration model can be found using Proposition~\ref{prop:deg} and is given by
\begin{equation}\label{eq:linedeg}
p_k=\begin{cases}
	\frac{L\phi}{L\phi+1-\phi} &\text{if } k=2,\\
	\frac{1-\phi}{L\phi+1-\phi} &\text{if } k=3,\\
	0&\text{otherwise}.
\end{cases}
\end{equation}
In this example $\mathbb{E}[D]=2\phi+3(1-\phi)$. Furthermore, $g(H_1,v,2,\pi)=\pi^{L-1}$ for all $v\in H_1$. In $H_2$ there is no percolation inside the community, hence $g(H_2,v,3,\pi)=1$. Equation~\eqref{eq:pic} now gives:
\begin{equation}
\pi_c=\frac{3-\phi}{ 2\phi\pi_c^{L-1}+6(1-\phi)},
\end{equation}
hence $2\phi\pi^{L}_c+6(1-\phi)\pi_c-3+\phi=0$.

Now we let the degree distribution as defined in~\eqref{eq:linedeg} remain the same, while changing the length of the path communities $L$. 
If in the total graph, we want to have a fraction of $a$ vertices of degree 3, then $a=p_3=\frac{1-\phi}{1-\phi+L\phi}$. Hence, $\phi=\frac{1-a}{1-a+La}$. 
In this way, we obtain hierarchical configuration models with the same degree distribution, but with different values of $L$. Figure~\ref{fig:linepi} shows the size of the largest component as calculated by~\eqref{eq:percsize} for $a=1/3$. As $L$ increases, $\pi_c$ decreases. Hence, adding this community structure `helps' the diffusion process. This can be explained by the fact that increasing $L$ decreases the number of line communities. Therefore, more vertex communities will be connected to one another, which decreases the value of $\pi_c$.
Another interesting observation is that the size of the giant component is non-convex in $\pi$. 
These non-convex shapes can be explained intuitively. As the lines get longer, there are fewer and fewer of them, since the degree distribution remains the same. Hence, if $L$ is large, there will only be a few long lines. These lines have $\pi_c\approx1$. Since there are only a few lines, almost all vertices of degree 3 will be paired to one another. The critical value for percolation on a configuration model with only vertices of degree 3 is $0.5$. Hence, for this hierarchical configuration model with $L$ large we will see the vertices of degree 3 appearing in the giant component as $\pi=0.5$, and the vertices in the lines as $\pi=1$. 

\begin{figure}[tb]
\centering
\includegraphics[width=0.3\textwidth]{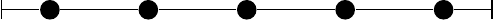}
\caption{A line community with $L=5$}
\label{fig:line}
\end{figure}

\subsection{A community structure that increases $\pi_c$}

As an example of a community structure that inhibits the diffusion process, consider a configuration model with intermediate vertices as introduced in~\cite{hofstad2014}: a configuration model where every edge is replaced by two edges with a vertex in between them. This is equal to a hierarchical configuration model with star-shaped communities as in Figure~\ref{fig:star}: one vertex that is connected to $L$ other vertices. Each of the $L$ other vertices has inter-community degree one. The vertex in the middle is not connected to other communities. We consider a hierarchical configuration model where all communities are stars of the same size. Therefore all star-shaped communities have the same number of outgoing edges, and $\mathbb{E}[D]=L$. 

The degree distribution of this hierarchical configuration model is given by
\begin{equation}\label{eq:stardeg}
p_k=\begin{cases}
\frac{L}{L+1} &\text{if } k=2,\\
\frac{1}{L+1}&\text{if } k=L,\\
0&\text{otherwise}.
\end{cases}
\end{equation}
Under percolation, the connected component of a vertex $v$ at the end point of a star can link to other half-edges only if the edge to the middle vertex is present. Then the number of half-edges to which $v$ is connected is binomially distributed, so that $g(H,v,k,\pi)=\pi{{L-1}\choose{k-1}}\pi^{k-1}(1-\pi)^{L-k}$ for $k\geq2$. Then~\eqref{eq:pic1} gives
\begin{equation}
\begin{aligned}[b]
\mathbb{E}[D_\pi^*]&=\frac{1}{\mathbb{E}[D]}\sum_{H}\sum_k\sum_{v\in V_H}P(H)d_v^{\sss{\text(b)}} kg(H,v,k+1,\pi)\\
&=\pi\sum_{k\geq 1}k\pi^k(1-\pi)^{L-k-1}{L-1\choose k}=(L-1)\pi^2.
\end{aligned}
\end{equation}
Then equation~\eqref{eq:pic} yields $\pi_c=(L-1)^{-1/3}$.

Now we consider a configuration model with the same degree distribution~\eqref{eq:stardeg}. For this configuration model, $\pi_c=\frac{3L}{4L+L^2-3L}=\frac{3}{L+1}$. 
Figure~\ref{fig:starpi} shows the size of the giant component of the hierarchical configuration model compared with a configuration model with the same degree distribution for different values of $L$. This hierarchical configuration has a higher critical percolation value than its corresponding configuration model. Intuitively, this can be explained from the fact that all vertices with a high degree are `hidden' behind vertices of degree 2, whereas in the configuration model, vertices of degree $L$ may be connected to one another. 

Combined with the previous example, we see that adding communities may lead to a higher critical percolation value or a lower one. Furthermore, the size of the giant component may be smaller or larger after adding communities.
\begin{figure}[tb]
\centering
\begin{minipage}[t]{0.45\linewidth}
\centering
\includegraphics[width=\textwidth]{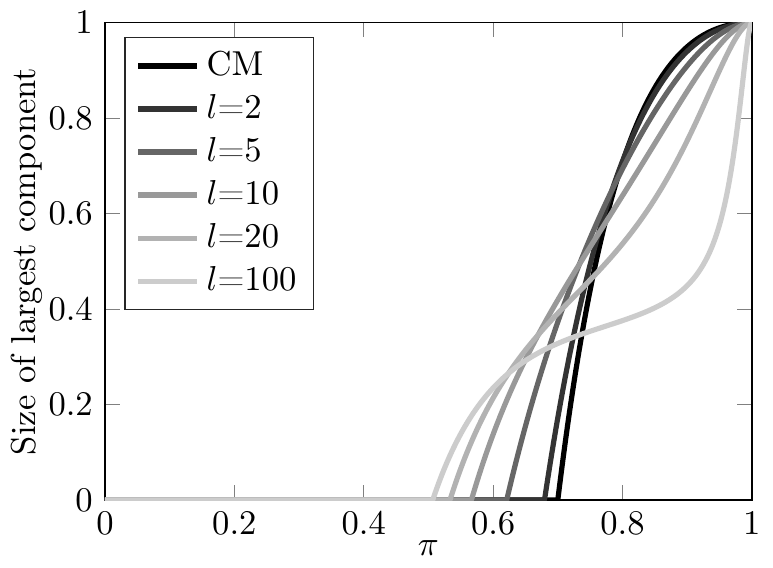}
\caption{Size of giant component against $\pi$ for line communities with different values of $L$.}
\label{fig:linepi}
\end{minipage}
\hspace{0.2cm}
\begin{minipage}[t]{0.45\linewidth}
\centering
\includegraphics[width=\textwidth]{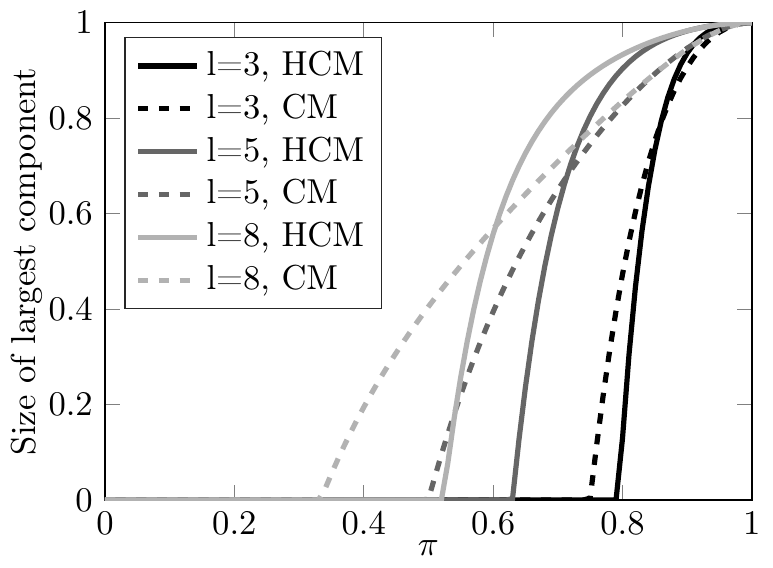}
\caption{Size of giant component against $\pi$ for star communities with different values of $L$.}
\label{fig:starpi}
\end{minipage}
\end{figure}

\section{Conclusions and discussion}\label{sec:disc}

In this paper, we have introduced the hierarchical configuration model, where the macroscopic graph is a configuration model, and on the microscopic level vertices are replaced by communities. We have analytically studied several properties of this random graph model, which led to several interesting insights. For example, the condition for a giant component to emerge in the hierarchical configuration model is completely determined by properties of the macroscopic configuration model. However, the size of the giant component also depends on the community sizes. In contrast, the asymptotic clustering coefficient is entirely defined by the clustering inside the communities. For bond percolation on the hierarchical configuration model, the critical percolation value depends on both the inter-community degree distribution, and the shape of the communities. 
Furthermore, we have shown that if communities are dense with a power-law degree distribution, then the edges between communities follow a power law with an exponent that is one higher than the exponent of  the degree distribution. We have further investigated power-law relations in several real-world networks, and compare these to the power-law relations in our hierarchical configuration model in a companion paper~\cite{stegehuis2015}. These real-world networks do not display this power-law shift, which implies that most communities in real-world networks do not satisfy the intuitive picture of dense communities. In fact, we find a power-law relation between the denseness of the communities and their sizes, so that the large communities are less dense than the smaller communities.

 Finally, we have shown that several existing models incorporating a community structure can be interpreted as a special case of the hierarchical configuration model, which underlines its generality.
Worthwhile extensions of the hierarchical configuration model for future research include directed or weighted counterparts and a version that allows for overlapping communities.

The analysis of percolation on the hierarchical configuration model has shown that the size of the largest percolating cluster and the critical percolation value do not necessarily increase or decrease when adding clustering. It would be interesting to investigate how other characteristics of the graph like degree-degree correlations influence the critical percolation value. 
Another interesting feature of the hierarchical configuration model is its applicability to real-world data sets. In this setting, the hierarchical configuration model creates a graph with the same degree distribution and the same community structure as real-world networks. In~\cite{stegehuis2016} we investigated by simulations how community structures affect the spread of several epidemic processes on real-world networks, including bond percolation and an SIR model, and showed that communities in real-world networks can either inhibit or enforce an epidemic. It would be interesting to study these epidemic processes also analytically.

% Table generated by Excel2LaTeX from sheet 'Sheet1'
\newcolumntype{Y}{>{\RaggedRight\arraybackslash}X} 
\begin{table}[htbp]
	\centering
	\begin{tabularx}{400pt}{l Y}
		\toprule
		$H$     & a community type\\
		$P(H)$  & asymptotic probability that a community is of type $H$ \\
		$p_{k,s}$ & asymptotic probability that a community has size $s$ and inter-community degree $k$ \\
		$S$     & asymptotic community size distribution \\
		$D$     & asymptotic community inter-community degree distribution \\
		$d_v^{\sss{(b)}}$    & inter-community degree: the number of edges from vertex $v$ to other communities \\
		$d_v^{\sss{(c)}}$    & intra-community degree: the number of edges from vertex $v$ to community members \\
		$d_v$    & degree of vertex $v$, $d_v=d_v^{\sss{(b)}}+d_v^{\sss{(c)}}$ \\
		$n$     & number of communities \\
		$N$     & number of vertices \\
		$\pi_c$   & critical percolation probability \\
		$g(H,v,k,\pi)$   & probability that vertex $v$ is connected to $k$ edges going out of community $H$ after percolating the edges inside the community with parameter $\pi$ \\
		\bottomrule
	\end{tabularx}%	
	\caption{Frequently used symbols}
	\label{tab:addlabel}%
\end{table}%

\paragraph*{Acknowledgement.}
This work is supported by NWO TOP grant 613.001.451.
The work of RvdH is further supported by the NWO VICI grant 639.033.806.  The work of JvL is further supported by an NWO TOP-GO grant and by an ERC Starting Grant.

\bibliographystyle{abbrv}

\end{document}